\newtheorem{dref}{Definition}[section] \newtheorem{lemma}[dref]{Lemma}
\newtheorem{theo}[dref]{Theorem} \newtheorem{prop}[dref]{Proposition}
\newtheorem{remark}[dref]{Remark} 
\newcommand{\defeq}{\stackrel{\mathrm{def}}{=}}
\newcommand{\C}{\mathbf{C}}
\newcommand{\N}{\mathbf{N}}
\newcommand{\Z}{\mathbf{Z}}
\newcommand{\spec}{\mathrm{Spec}}
\newcommand{\supp}{\mathop{\rm supp}}
\newcommand{\e}{\mathrm{e}}
\newcommand{\HS}{\mathrm{HS}}
\newcommand{\mO}{\mathcal{O}}
\title{General Toeplitz matrices subject to Gaussian perturbations}
\author{Johannes Sj\"ostrand}
\address[Johannes Sj\"ostrand]{IMB, 
  Universit\'e de Bourgogne Franche-Comt\'e, 
  UMR 5584 du CNRS, 
  9, avenue Alain Savary - BP 47870 - 21078 Dijon Cedex, France.}
\email{johannes.sjostrand@u-bourgogne.fr}
\author{Martin Vogel}
\address[Martin Vogel]{Institut de Recherche Math{\'e}matique Avanc{\'e}e, UMR 7501, Universit{\'e} de Strasbourg et CNRS, 7 rue Ren{\'e} Descartes, 67000 Strasbourg, France}
\email{vogel@math.unistra.fr}
 \date{}
 \keywords{Spectral theory; non-self-adjoint operators; random perturbations}
\subjclass[2010]{47A10, 47B80, 47H40, 47A55}
\date{}
\begin{document}
\maketitle
\begin{abstract}
We study the spectra of general $N\times N$ Toeplitz matrices given by symbols in the 
Wiener Algebra perturbed by small complex Gaussian random matrices, in the regime $N\gg 1$. 
We prove an asymptotic formula for the number of eigenvalues of the perturbed 
matrix in smooth domains. We show that these eigenvalues follow a Weyl law with 
probability sub-exponentially close to $1$, as $N\gg1$, in particular that most eigenvalues of the 
 perturbed Toeplitz matrix are close to the curve in the complex plane given by the symbol of the 
 unperturbed Toeplitz  matrix. 
 \end{abstract}
 \setcounter{tocdepth}{1}
\tableofcontents
\section{Introduction and main result}\label{int}
\setcounter{equation}{0}
Let $a_{\nu}\in {\bf C}$, for $\nu\in {\bf Z}$ and assume that
\begin{equation}\label{unp.1}
|a_\nu |\le {\mathcal{O}}(1)m(\nu ),
\end{equation}
where $m:{\bf Z}\to ]0,+\infty [$ satisfies
\begin{equation}\label{unp.2}
	(1+|\nu|)m(\nu)\in \ell^1,
\end{equation}
and
\begin{equation}\label{unp.3}
	m(-\nu )=m(\nu ),\ \forall \nu \in \mathds{Z}.
\end{equation}
\par
Let
\begin{equation}\label{unp.5}
p(\tau )=\sum_{-\infty }^{+\infty }a_\nu \tau ^\nu ,
\end{equation}
act on complex valued functions on ${\bf Z}$. Here $\tau $ denotes
translation by 1 unit to the right: $\tau u(j)=u(j-1)$, $j\in {\bf
  Z}$. By (\ref{unp.2}) we know that $p(\tau )={\mathcal{O}}(1):\ell^2({\bf
  Z})\to \ell^2({\bf Z})$. Indeed, for the corresponding operator
norm, we have
\begin{equation}\label{unp.5.1}
\|p(\tau )\|\le \sum |a_j|\| \tau
^j\|=\|a\|_{\ell^1}\le {\mathcal{O}}(1)\|m\|_{\ell^1}.
\end{equation}
From the identity, $\tau (e^{ik\xi })=e^{-i\xi }e^{ik\xi }$, we
define the symbol of $p(\tau )$ by
\begin{equation}\label{unp.6}
  p(e^{-i\xi})=\sum_{-\infty }^\infty a_\nu e^{-i\nu \xi }.
\end{equation}
It is an element of the Wiener algebra \cite{BoSi99} and by \eqref{unp.2} in $C^1(S^1)$. 
\\
\par
We are interested in the \emph{Toeplitz matrix}
\begin{equation}\label{unp.7.5.1}
P_N \defeq 1_{[0,N[}p(\tau )1_{[0,N[},
\end{equation}
acting on $\C^N \simeq \ell^2([0,N[)$, for $1\ll N<\infty $. Furthermore, we frequently identify $\ell^2([0,N[)$ with the space
$\ell^2_{[0,N[}({\bf Z})$ of functions $u\in \ell^2({\bf Z})$ with support in $[0,N[$. 
\\
\par
The spectra of such Toeplitz matrices have been studied thoroughly, see \cite{BoSi99} for an overview. 
Let $P_\infty $ denote $p(\tau )$ as an operator $\ell^2({\bf Z})\to \ell^2({\bf Z})$. It is a normal operator and 
by Fourier series expansions, we see that the spectrum of $P_\infty $ is given by
\begin{equation}\label{unp.7}
	\sigma (P_\infty )=p(S^1).
\end{equation}
The restriction $P_{\N}=P_{\infty}|_{\ell^2(\N)}$ of $P_{\infty}$ to $\ell^2(\N)$, is in general no longer 
normal, except for specific choices of the coefficients $a_\nu$. The essential spectrum of the Toeplitz 
operator $P_{\N}$ is given by $p(S^1)$ and we have pointspectrum in all loops of $p(S^1)$ 
with non-zero winding number, i.e.  
\begin{equation}\label{unp7.1}
	\sigma(P_{\N}) = p(S^1) \cup \{ z\in \C; \mathrm{ind}_{p(S^1)}(z)\neq 0 \}.
\end{equation}
By a result of Krein \cite[Theorem 1.15]{BoSi99} the winding number of $p(S^1)$ 
around the point $z\not\in p(S^1)$ is related to the Fredholm index of $P_{\N}-z$: 
$\mathrm{Ind}(P_{\N}-z) = - \mathrm{ind}_{p(S^1)}(z)$.
\\
\par
The spectrum of the Toeplitz matrix $P_N$ is contained in a small neighborhood of the spectrum 
of $P_{\N}$. More precisely, for every $\epsilon >0$, 
\begin{equation}\label{unp7.2}
	\sigma(P_{N}) \subset \sigma(P_{\N})+D(0,\epsilon )
\end{equation}
for $N>0$ sufficiently large, where $D(z,r)$ denotes the open disc of
radius $r$, centered at $z$. Moreover, the limit of $\sigma(P_{N})$ as $N\to
\infty$ is contained in a union of analytic 
arcs inside $ \sigma(P_{\N})$, see \cite[Theorem 5.28]{BoSi99}. 
\\
\par
We show in Theorem \ref{main} below that after adding a small random perturbation to $P_N$, 
most of its eigenvalues will be close to the curve $p(S^1)$ with probability very close to $1$. 
See Figure \ref{fig1} below for a numerical illustration.
\subsection{Small Gaussian perturbation}
Consider the random matrix 
\begin{equation}\label{unp7.3}
 Q_{\omega}\defeq Q_{\omega}(N) \defeq  (q_{j,k}(\omega))_{1\leq j,k\leq N} 
\end{equation}
with complex Gaussian law 
\begin{equation*}
	(Q_{\omega})_*(d\mathds{P}) = \pi^{-N^2} \e^{-\|Q\|_{\mathrm{HS}}^2} L(dQ),
\end{equation*}
where $L$ denotes the Lebesgue measure on $\C^{N\times N}$. The 
entries $q_{j,k}$ of $Q_{\omega}$ are independent and identically 
distributed complex Gaussian random variables with expectation $0$, 
and variance $1$. 
\par 
We recall that the probability distribution of a complex Gaussian 
random variable $\alpha \sim \mathcal{N}_{\C}(0,1)$, is given by 
\begin{equation*}
	\alpha_*(d\mathds{P}) = \pi^{-1} \e^{-|\alpha|^2} L(d\alpha),
\end{equation*}
where $L(d\alpha)$ denotes the Lebesgue measure on $\C$. 
If $\mathds{E}$ denotes the expectation with respect to the probability 
measure $\mathds{P}$, then 
\begin{equation*}
	\mathds{E}[\alpha] = 0, \quad \mathds{E}[|\alpha|^2] = 1.
\end{equation*}

We are interested in studying the spectrum of the random perturbations of 
the matrix $P_N^0=P_N$: 
\begin{equation}\label{pert.1}
 P_N^{\delta} \defeq P_N^0 + \delta Q_{\omega}, 
 \quad 0 \leq \delta \ll 1.
\end{equation}
\subsection{Eigenvalue asympotics in smooth domains}
Let $\Omega \Subset \C$ be an open simply connected set with smooth 
boundary $\partial\Omega$, which is independent of $N$, satisfying 
\begin{enumerate}
	\item $\partial\Omega$ intersects $p(S^1)$ in at most finitely many 
		points;
	\item $p(S^1)$ does not self-intersect at these points of intersection;
	\item these points of intersection are non-critical, i.e. 
		\begin{equation*}%
			\partial_{\zeta}p \neq 0 \hbox{ on } p^{-1}(\partial\Omega \cap p(S^1) );
		\end{equation*}
         \item $\partial\Omega$ and $p(S^1)$ are transversal at every point of the 
         	intersection. 
\end{enumerate}
\begin{theo}\label{main}
  Let $p$ be as in \eqref{unp.6} and let $P_N^{\delta}$ be as in \eqref{pert.1}. 
  Let $\Omega$ be as above,  satisfying conditions (1) - (4),  pick a
  $\delta_0\in]0,1[$ and let $\delta_1 >3/2$. If  
\begin{equation}\label{m0}
	\e^{- N^{\delta_0} } \leq \delta \ll N^{-\delta_1}, 
\end{equation}
then there exists $\varepsilon_N = o(1)$, as $N\to \infty$, such that 
 \begin{equation}\label{m1}
	\left|\#(\sigma(P^{\delta}_N)\cap \Omega )
	-  \frac{N}{2\pi} \int_{S^1\cap\, p^{-1}(\Omega)}L_{S^1}(d\theta)\right| \leq \varepsilon_N N, 
\end{equation}
with probability 
 \begin{equation}\label{m2}
	 \geq 1 - \e^{-N^{\delta_0}}.
\end{equation}
\end{theo}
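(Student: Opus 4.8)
The plan is to count eigenvalues of $P_N^\delta$ in $\Omega$ by a contour integral of the logarithmic derivative of a suitable regularized determinant, compare it to the unperturbed counting problem, and control the random fluctuations using standard $\bar\partial$/subharmonic techniques. More precisely, I would set $D(z) \defeq \det(P_N^\delta - z)$ and, after establishing upper and lower bounds on $|D(z)|$ on a suitable grid of points, apply Jensen's formula (or its Green-function version on $\Omega$) to count the zeros of $D$ inside $\Omega$. The upper bound $\log|D(z)| \le N \phi(z) + o(N)$ with $\phi(z) = \frac{1}{2\pi}\int_{S^1}\log|p(e^{-i\theta})-z|\,d\theta$ (the would-be potential) is the easy direction: it follows from $|\det(P_N^\delta-z)| \le \|P_N^\delta-z\|_{\mathrm{tr-type}}$ estimates together with the fact that $P_N$ is a Toeplitz matrix whose symbol is in the Wiener algebra, so one controls singular values via the symbol. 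The perturbation $\delta Q_\omega$ contributes only $O(\delta N)$-type corrections to such bounds with overwhelming probability since $\|Q_\omega\| = O(\sqrt N)$ with probability $\ge 1 - e^{-N/O(1)}$.

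The harder direction is the lower bound $\log|D(z)| \ge N\phi(z) - o(N)$ for $z$ outside a small neighborhood of $p(S^1)$ (and on a grid of points near $p(S^1)$), which cannot hold deterministically — it is exactly here that the randomness is essential, since $P_N - z$ can be exponentially close to non-invertible (the pseudospectrum of a Toeplitz matrix fills the region enclosed by $p(S^1)$). The standard mechanism, going back to the Sjöstrand–Vogel circle of ideas on random perturbations of non-self-adjoint operators, is: write $P_N^\delta - z = (P_N - z) + \delta Q_\omega$, reduce to a Grushin problem for $P_N - z$ (using the Fredholm structure from Krein's theorem quoted in \eqref{unp7.1}), and show that with probability $\ge 1 - e^{-N^{\delta_0}}$ the smallest singular value of $P_N^\delta - z$ is $\ge e^{-C N^{\delta_0}}\cdot(\text{something})$, i.e. bounded below by $\delta$ times a negative power of $N$. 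This uses that the relevant finite-rank part of the Grushin problem becomes a small Gaussian matrix whose smallest singular value has a well-understood small-ball probability, and the hypothesis $\delta \gg e^{-N^{\delta_0}}$ is precisely what makes $e^{-N^{\delta_0}} \le \delta$ usable while $\delta \ll N^{-\delta_1}$ keeps the perturbation from moving eigenvalues macroscopically. The main obstacle is making this lower bound uniform over a sufficiently fine grid of $z$'s near $\partial\Omega$ and near $p(S^1)$, and handling the finitely many transversal intersection points (conditions (1)–(4)) where the local geometry of $p(S^1)$ meeting $\partial\Omega$ must be analyzed — this is where conditions (2)–(4) (no self-intersection, non-critical, transversal) are used to guarantee that only an $o(N)$ error is incurred near those points.

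Granting both bounds on a grid, I would invoke a quantitative counting lemma: if a holomorphic function $D$ on a neighborhood of $\bar\Omega$ satisfies $\log|D| \le N\phi + o(N)$ throughout and $\log|D| \ge N\phi - o(N)$ at enough boundary grid points, then the number of zeros of $D$ in $\Omega$ equals $\frac{N}{2\pi}\int \Delta\phi \cdot 1_\Omega + o(N) = \frac{N}{2\pi}\mu(p^{-1}(\Omega)\cap S^1) + o(N)$, where $\Delta \phi$ (as a distribution) is the pushforward under $p$ of normalized arclength on $S^1$ — this identifies the main term with $\frac{N}{2\pi}\int_{S^1\cap p^{-1}(\Omega)} L_{S^1}(d\theta)$ as in \eqref{m1}. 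The error $\varepsilon_N$ absorbs all the $o(N)$ contributions: the symbol being merely $C^1$ (not analytic) limits the rate, so $\varepsilon_N$ will be something like a negative power of $\log N$ or a small negative power of $N$ rather than exponentially small, but that is consistent with the statement $\varepsilon_N = o(1)$. Finally, the probability bound \eqref{m2} comes from intersecting the two bad events — $\{\|Q_\omega\| > C\sqrt N\}$ and the union over the grid of $\{s_N(P_N^\delta - z_j) \text{ too small}\}$ — and choosing the grid of polynomial cardinality in $N$, so that a union bound leaves the failure probability $\le e^{-N^{\delta_0}}$ as required.
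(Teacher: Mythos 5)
Your outline follows the same architecture as the paper's proof (an upper bound for $\ln|\det(P_N^\delta-z)|$ valid everywhere, probabilistic lower bounds at $\mO(1/\alpha)$ points of $\partial\Omega$ away from $p(S^1)$, the zero-counting theorem of Sj\"ostrand for holomorphic functions of exponential growth, and the transversality conditions (1)--(4) to make the boundary contribution $\mO(\alpha N)$). However, the central step is not closed as you state it: a lower bound on the \emph{smallest} singular value of $P_N^\delta-z$ does not give $\ln|\det(P_N^\delta-z)|\ge N\phi(z)-o(N)$, because the determinant is the product of \emph{all} singular values, and nothing in your argument produces the main term $N\phi(z)$ from below. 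The symbol-based singular-value control you invoke (Avram--Parter/Szeg\H{o}-type) only bounds $\sum\log s_j$ from above, since $\log$ is unbounded below and the unperturbed matrix genuinely has exponentially small singular values inside the loops of $p(S^1)$. The paper's route is different in exactly this respect: first a comparison of $P_N$ with the periodized operator $p_N(\tau)$ on $\mathbf{Z}/(N+M)\mathbf{Z}$ with trace-class error $\epsilon(M)\to0$ (this is where $(1+|\nu|)m\in\ell^1$ enters, see \eqref{unp.15}), then a second Grushin problem that removes the at most $M$ small singular values, yielding $\bigl|\ln|\det\mathcal{T}^\delta|-\ln|\det(p_N(\tau)-z)|\bigr|=\mO(1)$ together with the exact Schur-complement identity $\det(P_N^\delta-z)=\det\mathcal{T}^\delta\,\det G_{-+}^\delta$, \eqref{det.21}--\eqref{det.22}. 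The randomness is then used to bound from below the determinant of the effective matrix $G_{-+}^\delta$, of \emph{bounded} dimension $\le M$, not a singular value of the full $N\times N$ matrix; this is what makes the probabilistic loss $C_2\delta^{-M}e^{-t/2}$ with $t=N^{\epsilon_0}$ compatible with $\delta\ge e^{-N^{\delta_0}}$ (Proposition \ref{lb1}). Your remark about the ``finite-rank part of the Grushin problem'' gestures at this, but the chain from your stated inputs to the two-sided estimate at the grid points is missing, and as formulated (via the smallest singular value of $P_N^\delta-z$) it would not suffice.

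Two further points where the proposal is thinner than what is actually needed. For the counting theorem the upper bound must hold across a neighborhood $\gamma_\alpha$ of $p(S^1)$; the paper achieves this by replacing $\frac1N\ln|\det(p_N(\tau)-z)|$ inside $\gamma_\alpha$ by its harmonic extension \eqref{ce.5}--\eqref{ce.6} and using subharmonicity of $\ln|\det(P_N^\delta-z)|$; if you instead use the limiting potential $\frac{1}{2\pi}\int\log|p(e^{i\theta})-z|\,d\theta$, you must prove a uniform $o(N)$ upper bound up to the curve and a lower bound relative to the \emph{same} majorant, which is not addressed. Finally, identifying $\frac{N}{2\pi}\int_\Omega\Delta\phi$ with $\frac{N}{2\pi}\int_{S^1\cap p^{-1}(\Omega)}L_{S^1}(d\theta)$ up to $\mO(\alpha N)+o(N)$ is where conditions (1)--(4) do quantitative work (in the paper via the Green's function estimates \eqref{ce.26} and the dyadic summation \eqref{ce.27}--\eqref{ce.28}); you state the correct conclusion but give no argument. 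These last two items are fixable along the lines you indicate, but the determinant lower bound as you set it up is a genuine gap.
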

In \eqref{m1} we view $p$ as a map from $S^1$ to $\mathbf{C}$. 
Theorem \ref{main} shows that most eigenvalues of $P_N^{\delta}$ can be found close to 
the curve $p(S^1)$ with probability subexponentially close to $1$. This is illustrated in 
Figure \ref{fig1} for two different symbols. 
\begin{figure}[ht]
 \begin{minipage}[b]{0.49\linewidth}
  \centering
   \includegraphics[width=\textwidth]{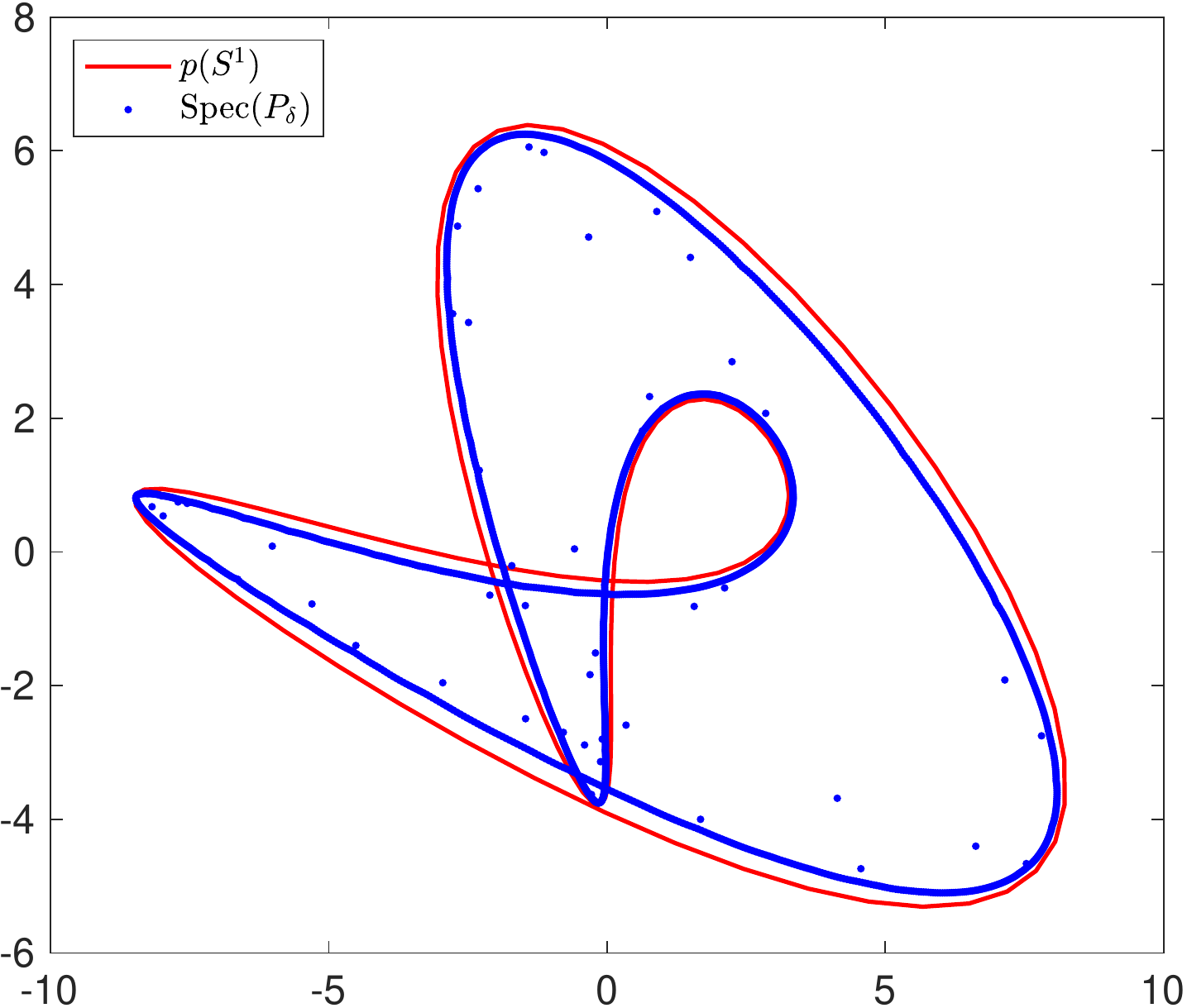}
 \end{minipage}
 \hspace{0cm}
 \begin{minipage}[b]{0.49\linewidth}
  \includegraphics[width=\textwidth]{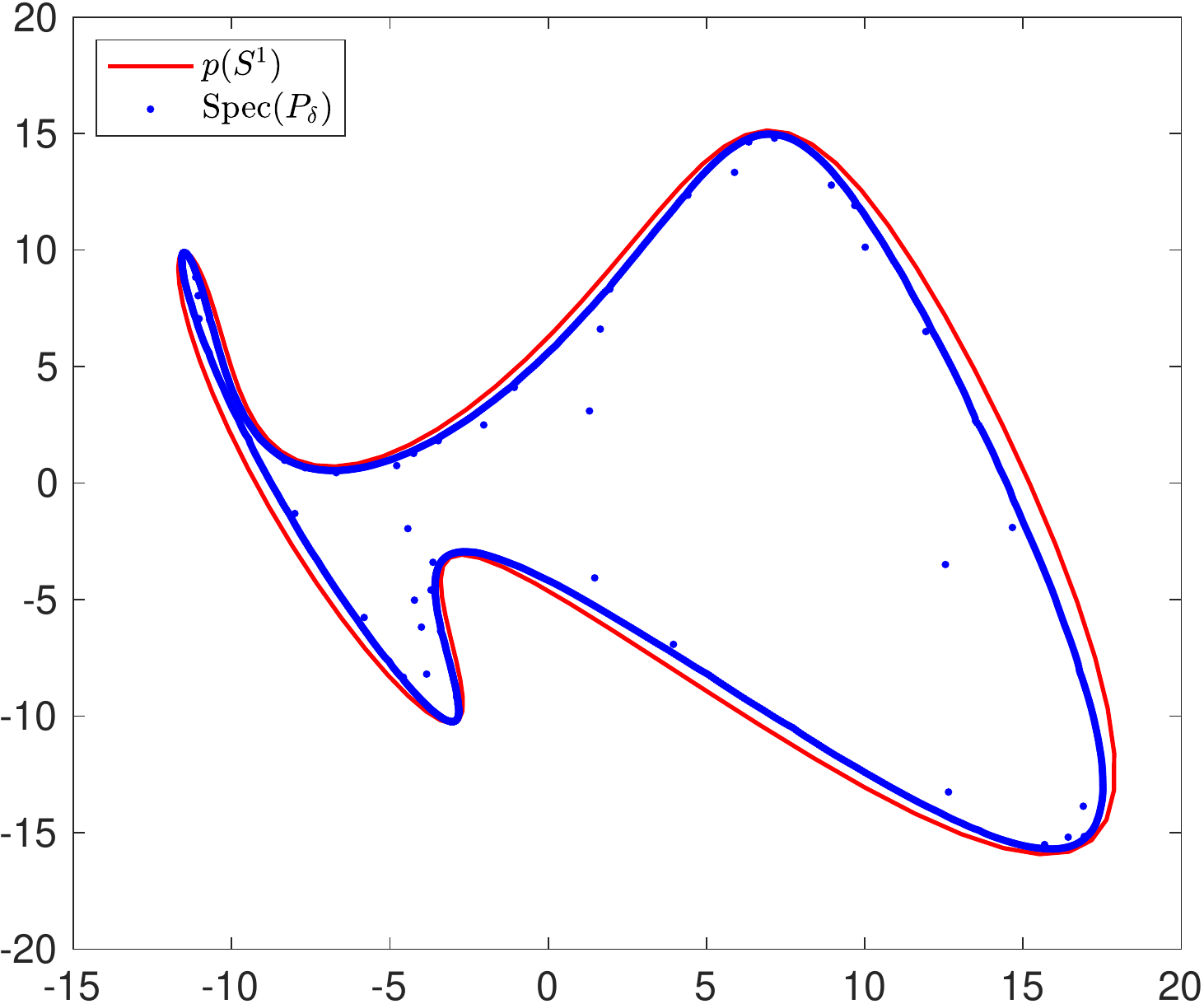}
 \end{minipage}
   \caption{The left hand side shows the spectrum of the perturbed Toeplitz matrix with symbol defined in \eqref{exp1}, \eqref{exp1.1} 
   	and the right hand side shows the spectrum of the perturbed Toeplitz matrix with symbol defined in \eqref{exp1.2}, \eqref{exp1.1} 
	The red line shows the symbol curve $p(S^1)$.}
	\label{fig1}
\end{figure}
The left hand side of Figure \ref{fig1} shows the spectrum of a perturbed Toeplitz matrix 
with $N=2000$ and $\delta =10^{-14}$, given by the symbol $p = p_0 + p_1$ where 
\begin{equation}\label{exp1}
	  p_0(1/\zeta) = -\zeta^{-4} -(3+2i)\zeta^{-3} +i\zeta^{-2}+\zeta^{-1} 
		 +10\zeta+(3+i)\zeta^2+4\zeta^3+i\zeta^4
\end{equation}
and 
\begin{equation}\label{exp1.1}
	  p_1(1/\zeta) = \sum_{\nu \in \Z} a_{\nu} \zeta^{\nu}, \quad a_{0}=0, ~
	  a_{-\nu} = 0.7|\nu|^{-5}+i|\nu|^{-9}
	  ,~ a_\nu = -2i\nu^{-5}+0.5\nu^{-9}~~ \nu \in \N.
\end{equation}
The red line shows the curve $p(S^1)$. The right hand side of Figure \ref{fig1} similarly shows the spectrum of the perturbed 
Toeplitz matrix given by $p= p_0 + p_1$ where $p_1$ is as above and 
\begin{equation}\label{exp1.2}
	    p_0(1/\zeta) = -4\zeta^{1}  -2i \zeta^{2}+ 2i\zeta^{-1}-\zeta^{-2}+2\zeta^{-3}.
\end{equation}
In our previous work \cite{SjVo19}, we studied Toeplitz matrices with a finite number of bands, given by 
symbols of the form 
\begin{equation}\label{fd.2}
p(\tau) = \sum_{j=-N_-}^{N_+} a_j \tau^j, \quad a_{-N_-},
a_{-N_-+1},\dots, a_{N_+} \in \C,\ a_{\pm N_\pm}\ne 0.
\end{equation}
In this case the symbols are analytic functions on $S^1$ and we are able to provide in  
\cite[Theorem 2.1]{SjVo19} a version of Theorem \ref{main} with a much sharper 
remainder estimate. See also \cite{Sj19,SjVo15b}, concerning the special cases of 
large Jordan block matrices $p(\tau) = \tau^{-1}$ and large bi-diagonal matrices
 $p(\tau) = a\tau + b\tau^{-1}$, $a,b\in \C$. 
However, Figure \ref{fig1} suggests that one could hope for a better remainder 
estimate in Theorem \ref{main} as well. 
\subsection{Convergence of the empirical measure and related results}
An alternative way to study the limiting distribution of the eigenvalues of $P_N^{\delta}$, up to 
errors of $o(N)$, is to study the \emph{empirical measure} of eigenvalues, defined by 
\begin{equation}\label{emp1}
	\xi_N \defeq \frac{1}{N}\sum_{\lambda \in \spec( P_N^{\delta})} \delta_{\lambda}
\end{equation}
where the eigenvalues are counted including multiplicity and $\delta_{\lambda}$ denotes 
the Dirac measure at $\lambda \in\C$. For any positive monotonically increasing function $\phi$  
on the positive reals and random variable $X$, Markov's inequality states that 
$\mathds{P} [ |X| \geq \varepsilon ] \leq \phi(\varepsilon)^{-1} \mathds{E}[ \phi(|X|)]$, assuming that the  
last quantity is finite. This yields that for $C_1>0$ large enough
\begin{equation}\label{mark1}
	\mathds{P} [ \| Q_{\omega}\|_{\HS} \leq \sqrt{C_1 N} ] \geq 1 - \e^{-N}.
\end{equation}
If $\delta \leq N^{-1}$, then \eqref{unp.5.1} and the the Borel-Cantelli Theorem shows that, 
almost surely, $\xi_N$ has compact support for $N>0$ sufficiently large. 
\par
We will show that, almost surely, $\xi_N$ converges weakly to the push-forward of the uniform 
measure on $S^1$ by the symbol $p$. 
\begin{theo}\label{main2}
	Let $\delta_0\in]0,1[$, let $\delta_1 >3/2$ and let $p$ be as in \eqref{unp.5}. If  
	\eqref{m0} holds, i.e. 
	\begin{equation*}
	\e^{- N^{\delta_0} } \leq \delta \ll N^{-\delta_1}
	\end{equation*}
	then, almost surely, 
	\begin{equation}\label{int.4}
		\xi_N \rightharpoonup  p_*\left(\frac{1}{2\pi} L_{S^1}\right), \quad N\to \infty, 
	\end{equation}
	weakly, where $L_{S^1}$ denotes the Lebesgue measure 
	on $S^1$.
\end{theo}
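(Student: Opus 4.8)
The plan is to deduce the almost sure weak convergence in \eqref{int.4} from the eigenvalue counting estimate \eqref{m1} of Theorem \ref{main}, by a standard test-function argument combined with a Borel--Cantelli step. First I would observe that weak convergence of probability measures on $\C$ can be tested on the class of smooth domains appearing in Theorem \ref{main}: if $\Omega$ is open, simply connected, with smooth boundary satisfying conditions (1)--(4), then \eqref{m1} together with \eqref{m2} gives, on an event of probability $\geq 1-\e^{-N^{\delta_0}}$,
\begin{equation*}
	\left| \xi_N(\Omega) - p_*\left(\tfrac{1}{2\pi}L_{S^1}\right)(\Omega) \right| \leq \varepsilon_N = o(1).
\end{equation*}
Since $\sum_N \e^{-N^{\delta_0}} < \infty$, the Borel--Cantelli lemma shows that almost surely this inequality holds for all $N$ large enough, hence $\xi_N(\Omega) \to p_*(\tfrac{1}{2\pi}L_{S^1})(\Omega)$ almost surely, for each \emph{fixed} such $\Omega$.

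Next I would upgrade this to genuine weak convergence. The technical point is that a priori the null set on which convergence fails depends on $\Omega$, and the class of admissible $\Omega$ is uncountable. To handle this I would fix a countable family $\mathcal{R}$ of admissible domains that is rich enough to determine weak limits --- for instance, take $\mathcal{R}$ to consist of all finite unions of open discs $D(z,r)$ with $z\in \C$ having rational coordinates and $r\in \mathbf{Q}_+$, discarding the (at most countably many, for each fixed $p$) radii for which conditions (1)--(4) fail; one checks that for all but countably many $r$ the circle $\partial D(z,r)$ meets $p(S^1)$ transversally in finitely many non-critical points where $p(S^1)$ does not self-intersect, using that $p\in C^1(S^1)$ and Sard-type arguments. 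On the intersection of the (countably many) full-measure events indexed by $\mathcal{R}$, we have $\xi_N(\Omega)\to p_*(\tfrac{1}{2\pi}L_{S^1})(\Omega)$ simultaneously for all $\Omega\in\mathcal{R}$. Standard approximation then shows that for any bounded continuous $\varphi$ on $\C$ one can sandwich $\int \varphi\, d\xi_N$ between sums of the form $\sum_j c_j \xi_N(\Omega_j)$ with $\Omega_j \in \mathcal{R}$, using also that $\xi_N$ has uniformly compact support for large $N$ almost surely (this follows, as noted in the excerpt, from \eqref{unp.5.1}, \eqref{mark1} and Borel--Cantelli, since $\delta \ll N^{-\delta_1} \leq N^{-1}$, so all eigenvalues lie in a fixed disc). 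Passing to the limit gives $\int \varphi\, d\xi_N \to \int \varphi\, d\,p_*(\tfrac{1}{2\pi}L_{S^1})$ almost surely, which is \eqref{int.4}.

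The main obstacle is the second step: ensuring that a single almost sure event works for \emph{all} test functions at once, i.e. the passage from ``for each fixed admissible $\Omega$, almost surely $\xi_N(\Omega)$ converges'' to ``almost surely, for all admissible $\Omega$, $\xi_N(\Omega)$ converges''. This is resolved by the countable exhausting family $\mathcal{R}$ and a uniform tightness bound, but it requires a small verification that the boundary of the limit measure $p_*(\tfrac{1}{2\pi}L_{S^1})$ is negligible for the approximating domains --- equivalently that $\tfrac{1}{2\pi}L_{S^1}(p^{-1}(\partial \Omega_j)) = 0$ for $\Omega_j\in\mathcal{R}$, which holds because $p^{-1}(\partial\Omega_j)\cap S^1$ is finite by the transversality in condition (4). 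A minor additional point is the reduction of weak convergence to convergence on this particular countable class of sets; this is routine since the measure $p_*(\tfrac{1}{2\pi}L_{S^1})$ is a Borel probability measure with compact support, hence regular, so its integral against continuous functions is approximated by step functions built from discs in $\mathcal{R}$. Everything else is soft measure theory and does not require any further input beyond Theorem \ref{main}.
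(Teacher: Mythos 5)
Your overall route---deducing Theorem \ref{main2} from the counting estimate \eqref{m1} through a countable convergence-determining family of admissible domains plus Borel--Cantelli---is the ``more direct'' alternative the paper itself alludes to, but as written it has a genuine gap, and it is exactly the gap that forces the paper to argue differently. Conditions (2) and (3) are not constraints on the radius that a Sard-type choice can arrange: they are constraints on the symbol at the points where $\partial\Omega$ meets $p(S^1)$, and for a general symbol in the class \eqref{unp.5} they can fail for \emph{every} curve crossing $p(S^1)$. For instance, for $p(\tau)=\tau+\tau^{-1}$ one has $p(e^{-i\xi})=2\cos\xi$, so $p(S^1)=[-2,2]$; every interior point of the segment is a self-intersection point and the two endpoints are critical values, so (2)--(3) fail at every point of $p(S^1)$. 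Any admissible $\Omega$ must then satisfy $\partial\Omega\cap[-2,2]=\emptyset$, hence by connectedness either contains the whole segment or is disjoint from it, and the numbers $\xi_N(\Omega)$ over admissible $\Omega$ cannot distinguish the limiting (arcsine-type) law on the segment from any other probability measure carried by it. So Theorem \ref{main} plus soft measure theory does not imply Theorem \ref{main2} in the stated generality; relatedly, your claim that only countably many radii are bad is unjustified (Sard-type arguments give almost every radius at best, and in the degenerate situation above all crossing radii are bad). This is precisely why the paper proves Theorem \ref{main2} not from Theorem \ref{main} but via logarithmic potentials: the two-sided bounds \eqref{ce.9}, \eqref{ce.12} are pointwise in $z\in K\setminus\gamma_\alpha$ and use none of the conditions (1)--(4); they give $U_{\xi_N}(z)\to U_{\xi}(z)$ almost surely for each fixed $z\notin p(S^1)$, hence for Lebesgue-almost every $z$ since $p(S^1)$ is a null set, and \cite[Theorem 7.1]{SjVo19} then upgrades almost-everywhere convergence of potentials to weak convergence, with the Borel--Cantelli step playing the role it does in your first paragraph.

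There are also smaller defects you would need to repair even when the symbol curve is generic (finitely many self-intersections and critical values, as in the banded case of \cite{SjVo19}, where this direct route does work): Theorem \ref{main} is stated for simply connected domains with smooth boundary, whereas finite unions of rational discs are in general neither simply connected nor smooth where the circles meet, so \eqref{m1} does not apply to them directly; and sandwiching a continuous test function by linear combinations of indicators of discs is not routine, since discs do not tile the plane---one would instead need a $\pi$-system/convergence-determining-class argument for single discs, or squares, whose corners again violate the smoothness hypothesis. These points are fixable in the generic case, but none of them rescues the argument for general $p$ as in \eqref{unp.5}, which is the generality Theorem \ref{main2} claims.
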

This result generalizes \cite[Corollary 2.2]{SjVo19} from the case of Toeplitz matrices with a 
finite number of bands to the general case \eqref{unp.5}. 
\\
\par
Similar results to Theorem \ref{main2} have been proven in various settings. 
In \cite{BPZ18,BPZ18b}, the authors consider the special case of band Toeplitz matrices, i.e. $P_N$ 
with $p$ as in \eqref{fd.2}. 
In this case they show that the convergence \eqref{int.4} holds weakly in probability for 
a coupling constant $\delta = N^{-\gamma}$, with $\gamma  >1/2$. Furthermore, 
they prove a version of this theorem for Toeplitz matrices with non-constant coefficients in 
the bands, see \cite[Theorem 1.3, Theorem 4.1]{BPZ18}. They follow a different approach than we 
do:  They compute directly the $\log |\det \mathcal{M}_N -z|$ by relating it to 
$\log |\det M_N(z)|$, where $M_N(z)$ is a truncation of $M_N -z$, where the smallest singular 
values of $M_N-z$ have been excluded. The level of truncation however depends on the strength 
of the coupling constant and it necessitates a very detailed analysis of the small singular values 
of $M_N -z$.
\\
\par
In the earlier work \cite{GuMaZe14}, the authors prove that the convergence 
\eqref{int.4} holds weakly in probability for the Jordan bloc matrix $P_N$ with $p(\tau) = \tau^{-1}$ \eqref{unp.5} and 
 a perturbation given by a complex Gaussian random matrix whose entries 
are independent complex Gaussian random variables whose variances vanishes (not necessarily at the same speed) polynomially fast, with minimal decay of order $N^{-1/2+}$. See also \cite{DaHa09} for a related result.
\par
In \cite{Wo16}, using a replacement principle developed in \cite{TVK10}, it was shown that the result of \cite{GuMaZe14} holds for perturbations given by 
complex random matrices whose entries are independent and identically distributed random 
complex random variables with expectation $0$ and variance $1$ and a coupling constant 
$\delta = N^{-\gamma}$, with $\gamma >2 $. 
\\
\\
\paragraph{\textbf{Acknowledgments}} The first author acknowledges support from the 2018 S. Bergman award.
The second author was supported by a CNRS Momentum fellowship. We are grateful to Ofer Zeitouni for his interest and 
a remark which lead to a better presentation of this paper. 
\section{The unperturbed operator}\label{unp}
\setcounter{equation}{0}
\par 
We are interested in the Toeplitz matrix
\begin{equation}\label{unp.7.5}
P_N=1_{[0,N[}p(\tau )1_{[0,N[}: \ell^2([0,N[)\to \ell^2([0,N[)
\end{equation}
for $1\ll N<\infty $, see also \eqref{unp.7.5.1}. Here we identify $\ell^2([0,N[)$ with the space
$\ell^2_{[0,N[}({\bf Z})$ of functions $u\in \ell^2({\bf Z})$ with
support in $[0,N[$. Sometimes we write $P_N=P_{[0,N[}$ and identify
$P_N$ with $P_{I}=1_Ip(\tau )1_I$ where $I=I_N$ is any interval in ${\bf Z}$ of
``length'' $|I|=\# I=N$.

\par Let $P_{\bf N}=P_{[0,+\infty [}$ and let $P_{{\bf
    Z}/\widetilde{N}{\bf Z}}$ denote $P=p(\tau )$, acting on
$\ell^2({\bf Z}/\widetilde{N}{\bf Z})$ which we identify with the space of
$\widetilde{N}$-periodic functions on ${\bf Z}$. Here
$\widetilde{N}\ge 1$. Using the the discrete Fourier transform,
we see that
\begin{equation}\label{unp.8}
\sigma (P_{{\bf Z}/\widetilde{N }{\bf
      Z}})=p(S_{\widetilde{N}}),
\end{equation}
where $S_{\widetilde{N}}$ is the dual of ${\bf Z}/\widetilde{N}\Z$ and given by
$$
S_{\widetilde{N}}=\{ e^{ik2\pi /\widetilde{N}};\, 0\le k<\widetilde{N}
\}.
$$
\par 
Let
\begin{equation}\label{unp.8.1}
p_N(\tau )=\sum_{|\nu |\le N}a_\nu \tau ^\nu=\sum_{\nu \in {\bf
    Z}}a_\nu ^N \tau ^\nu ,\ \ a_\nu ^N=1_{[-N,N]}(\nu )a_\nu . 
\end{equation}
and notice that
\begin{equation}\label{unp.9}
P_N=1_{[0,N[}\, p_N(\tau )1_{[0,N[}.
\end{equation}
We now consider $[0,N[$ as an interval $I_N$ in ${\bf Z}/\widetilde{N}{\bf
  Z}$, $\widetilde{N}=N+M$, where $M\in \{1,2,.. \}$ will be fixed and
independent of $N$. The matrix of $P_N$, indexed over $I_N\times
I_N$ is then given by
\begin{equation}\label{unp.10}
P_N(j,k)=a^N_{\widetilde{j}-\widetilde{k}},\ j,k\in I_N\subset {\bf Z}/\widetilde{N}Z,
\end{equation}
where $\widetilde{j},\widetilde{k}\in {\bf Z}$ are the preimages of
$j,k$ under the projection ${\bf Z}\to{\bf Z} /\widetilde{N}{\bf Z}$, that belong
to the interval $[0,N[\subset {\bf Z}$. 
\par
Let $\widetilde{P}_N$
 be given by the formula (\ref{unp.9}), with the difference that we
 now view $\tau $ as a translation on $\ell^2({\bf
   Z}/\widetilde{N}{\bf Z})$:
\begin{equation}\label{unp.10.1}
\widetilde{P}_N=1_{I_N}p_N(\tau )1_{I_N}.
\end{equation}
The matrix of $\widetilde{P}_N$ is given by
\begin{equation}\label{unp.11}
  \widetilde{P}_N(j,k)=\sum_{\nu \in {\bf Z},\atop
  \nu \equiv j-k\, \mathrm{mod\,}\widetilde{N}{\bf Z}}a_\nu ^N,\quad
j,k\in I_N.
\end{equation}

Alternatively, if we let $\widetilde{j},\widetilde{k}$ be the
preimages in $[0,N[$ of $j,k\in I_N$, then
\begin{equation}\label{unp.12}
  \widetilde{P}_N(j,k)=\sum_{\widehat{j}\in{\bf Z};\ \widehat{j}\equiv
    \widetilde{j}\,\mathrm{mod\,}\widetilde{N}{\bf Z}}a^N_{\widehat{j}-\widetilde{k}}.
\end{equation}
Recall that the terms in (\ref{unp.11}), (\ref{unp.12}) with $|\nu |>N$
or $|\widehat{j}-\widetilde{k}|>N$ do vanish. This implies that with
$\widetilde{j}$, $\widetilde{k}$ as in (\ref{unp.12}),
\begin{equation}\label{unp.13}
\widetilde{P}_N(j,k)-P_N(j,k)=a^N_{\widetilde{j}-\widetilde{N}-\widetilde{k}}+a^N_{\widetilde{j}+\widetilde{N}-\widetilde{k}}.
\end{equation}
Here
\[
  \begin{split}
&\widetilde{j}-\widetilde{N}\in [0,N[-\widetilde{N}=[-\widetilde{N},N-\widetilde{N}[=[-N-M,-M[,\\
&
\widetilde{j}+\widetilde{N}\in [0,N[+\widetilde{N}=[\widetilde{N},N+\widetilde{N}[=[N+M,2N+M[.
\end{split}
\]
Since $\widetilde{k}\in [0,N[$ we have for the first term in
(\ref{unp.13}) that
$|\widetilde{j}-\widetilde{N}-\widetilde{k}|=\widetilde{k}+M+(N-\widetilde{j})$
with nonnegative terms in the last sum. Similarly for the second term
in (\ref{unp.13}), we have
$|\widetilde{j}+\widetilde{N}-\widetilde{k}|=\widetilde{j}+M+(N-\widetilde{k})$
where the terms in the last sum are all $\ge 0$.

\par It follows that the trace class norm of $P_N-\widetilde{P}_N$ is
bounded from above by
\begin{equation*}
\begin{split}
\sum_{j<-M,\ k\ge 0}|a_{j-k}| &+\sum_{j\ge N+M,\ k<N }|a_{j-k}|
=\sum_{k\ge 0,\ j\le -M}|a_{j-k}|+\sum_{k\le 0,\ j\ge M}|a_{j-k}|\\
&\le 2C\sum_{k=0}^\infty \sum_{j=0}^\infty m(M+k+j)
=2C \sum_{k=0}^\infty (k+1)m(M+k)\\
&=2C\sum_{k=M}^\infty (k+1-M)m(k).
\end{split}
\end{equation*}
By (\ref{unp.2}), it follows that 
\begin{equation}\label{unp.15}
  \|
P_N-\widetilde{P}_N\|_{\mathrm{tr}}\le 2C\sum_{k=M}^{+\infty
}(k+1-M)m(k)\to 0,\ M\to \infty ,
\end{equation}
uniformly with respect to $N$.

\section{A Grushin problem for $P_N-z$}\label{gr} 
\setcounter{equation}{0}
Let $K\Subset \C$ be an open relatively compact set and let $z\in K$. 
Consider 
\begin{equation}\label{gr.11}
J=[-M,0[,\ I_N=[0,N[
\end{equation}
as subsets of ${\bf Z}/(N+M){\bf Z}$ so that
$$
	J \cup I_N={\bf Z}/(N+M){\bf Z}=:{\bf Z}_{N+M}
$$
is a partition. 
Recall \eqref{unp.8.1}, \eqref{unp.10.1} and consider
$$
p_N(\tau )-z:\ell^2({\bf Z}_{N+M})\to \ell^2({\bf Z}_{N+M})
$$
and write this operator as a $2\times 2$ matrix
\begin{equation}\label{gr.2}
p_N-z=\begin{pmatrix} \widetilde{P}_N-z &R_-\\ R_+& R_{+-}(z)\end{pmatrix},
\end{equation}
induced by the orthogonal decomposition
\begin{equation}\label{gr.1}
\ell^2({\bf Z}_{N+M})=\ell^2(I_N)\oplus \ell^2(J).
\end{equation}
\par 
The operator $p_N(\tau )$ is normal and we know by 
 \eqref{unp.8} that its spectrum is
\begin{equation}\label{gr.3}
\sigma (p_N(\tau ))=p_N(S_{N+M}).
\end{equation}
Replacing $\widetilde{P}_N$ in \eqref{gr.2} by $P_N$ \eqref{unp.9}, we put 
\begin{equation}\label{gr.4}
{\mathcal{P}}_N(z)=\begin{pmatrix}P_N-z & R_-\\ R_+ &R_{+-}(z)\end{pmatrix}.
\end{equation}
Then, by (\ref{unp.15}),
\begin{equation}\label{gr.5}
  \|
{\mathcal{P}}_N(z)-(p_N-z)\|_{\mathrm{tr}}\le 2C\sum_{k=M}^{+\infty
}(k+1-M)m(k)=:\epsilon (M) .
\end{equation}
If $\epsilon (M)<\mathrm{dist\,}(z,p_N(S_{N+M}))=:d_N(z)$, then ${\mathcal{P}
  }(z)$ is bijective and
\begin{equation}\label{gr.6}
\| {\mathcal{P}}_N(z)^{-1}\|\le \frac{1}{d_N(z)-\epsilon (M)}.
\end{equation}
Write,
\begin{equation*}
\begin{split}
  {\mathcal{P}}_N(z) &=p_N(\tau )-z+{\mathcal{P}}_N(z)-(p_N(\tau )-z)\\
&=(p_N(\tau )-z)\left(1+(p_N(\tau )-z)^{-1}({\mathcal{P}}_N(z)-(p_N(\tau) -z)) \right).
\end{split}
\end{equation*}
Here,
\begin{equation*}
\begin{split}
  \big|
\det \big(1+(p_N(\tau )-z)^{-1}&({\mathcal{P}}_N(z)-(p_N(\tau )-z))\big)
\big|\\
&\le \exp \| (p_N(\tau )-z)^{-1}({\mathcal{P}}_N(z)-(p_N(\tau
)-z))\|_{\mathrm{tr}}\\
&\le \exp (\epsilon (M)/d_N(z)),
\end{split}
\end{equation*}
so 
\begin{equation}\label{gr.7}
|\det {\mathcal{P}}_N(z)|\le | \det (p_N(\tau )-z) |\, e^{\epsilon (M)/d_N(z)}.
\end{equation}
Similarly from
\begin{equation*}
\begin{split}
  p_N(\tau )-z &={\mathcal{P}}_N(z)+p_N(\tau )-z-{\mathcal{P}}_N(z)\\
  &={\mathcal{P}}_N(z)\left( 1+{\mathcal{P}}_N(z)^{-1}(p_N(\tau )-z-{\mathcal{P}}_N(z) \right),
\end{split}
\end{equation*}
we get
\begin{equation}\label{gr.8}
  |\det (p_N(\tau )-z)|\le |\det {\mathcal{P}}_N(z)| e^{\frac{\epsilon
      (M)}{d_N(z)-\epsilon (M)}}.
\end{equation}

\par In analogy with (\ref{gr.4}), we write
\begin{equation}\label{gr.10}
  {\mathcal{P}}_N(z)^{-1}={\mathcal{E}}_N(z)=\begin{pmatrix}E^N &E_+^N\\
E_-^N &E_{-+}^N
\end{pmatrix}
:\ \ell^2(I_N)\oplus \ell^2(J)\to \ell^2(I_N)\oplus \ell^2(J),
\end{equation}
where $J$, $I_N$ were defined in (\ref{gr.11}),
still viewed as intervals in ${\bf Z}_{N+M}$. From (\ref{gr.6}) we get
for the respective operator norms:
\begin{equation}\label{gr.12}
\| E^N\|, \| E^N_+\|, \| E^N_-\|, \| E^N_{-+}\|\le (d_N(z)-\epsilon (M))^{-1}.
\end{equation}

\section{Second Grushin problem}\label{2gr}
\setcounter{equation}{0}
We begin with a result, which is a generalization of 
\cite[Proposition 3.4]{SjZw07} to the case where $R_{+-}\neq 0$. 
\begin{prop}\label{prop:CGP}
Let ${\mathcal{H}}_1, {\mathcal{H}}_2, {\mathcal{H}}_{\pm}, {\mathcal{S}}_{\pm}$ be Banach spaces. 
If 
\begin{equation}\label{2gr.1}
  {\mathcal{P}}=\begin{pmatrix}P &R_-\\ R_+ &R_{+-}\end{pmatrix}:
{\mathcal{H}}_1\times {\mathcal{H}}_-\to  {\mathcal{H}}_2\times  {\mathcal{H}}_+  
\end{equation}
is bijective with bounded inverse,
$$
{\mathcal{E}}=\begin{pmatrix}E &E_+\\ E_- &E_{-+}\end{pmatrix}:
{\mathcal{H}}_2\times {\mathcal{H}}_+\to  {\mathcal{H}}_1\times  {\mathcal{H}}_-, 
$$
and if
\begin{equation}\label{2gr.2}
  {\mathcal{S}}=\begin{pmatrix}E_{-+} &S_-\\ S_+ & 0\end{pmatrix}:
  {\mathcal{H}}_+\times {\mathcal{S}}_-\to {\mathcal{H}}_-\times {\mathcal{S}}_+
\end{equation}
is bijective with bounded inverse
$$
{\mathcal{F}}=\begin{pmatrix}F &F_+\\ F_- &F_{-+}\end{pmatrix}:
{\mathcal{H}}_-\times {\mathcal{S}}_+\to  {\mathcal{H}}_+\times {\mathcal{S}}_-, 
$$
then
\begin{equation}\label{2gr.3}
  {\mathcal{T}}=\begin{pmatrix} P &R_-S_-\\
S_+R_+ &S_+R_{+-}S_-
\end{pmatrix}=:
\begin{pmatrix}T &T_-\\ T_+ &T_{+-}\end{pmatrix}
:
{\mathcal{H}}_1\times {\mathcal{S}}_-\to {\mathcal{H}}_2\times {\mathcal{S}}_+
\end{equation}
is bijective with the bounded inverse
\begin{equation}\label{2gr.4}
  {\mathcal{G}}=\begin{pmatrix}G &G_+\\ G_- &G_{-+}\end{pmatrix}=
  \begin{pmatrix}E-E_+FE_- &E_+F_+\\ F_-E_- &-F_{-+}\end{pmatrix}
  :{\mathcal{H}}_2\times {\mathcal{S}}_+ \to {\mathcal{H}}_1\times {\mathcal{S}}_-
\end{equation}
\end{prop}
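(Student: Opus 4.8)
\textbf{Proof plan for Proposition \ref{prop:CGP}.}

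The plan is to verify directly that $\mathcal{G}$ as defined in \eqref{2gr.4} is a two-sided inverse of $\mathcal{T}$ in \eqref{2gr.3}, using only the two hypotheses $\mathcal{E}=\mathcal{P}^{-1}$ and $\mathcal{F}=\mathcal{S}^{-1}$. First I would write out the eight scalar-block identities that $\mathcal{E}\mathcal{P}=\mathcal{P}\mathcal{E}=I$ and $\mathcal{F}\mathcal{S}=\mathcal{S}\mathcal{F}=I$ encode; the relevant ones are, from $\mathcal{P}\mathcal{E}=I$: $PE+R_-E_-=I$, $PE_++R_-E_{-+}=0$, $R_+E+R_{+-}E_-=0$, $R_+E_++R_{+-}E_{-+}=I$ (and the four mirror identities from $\mathcal{E}\mathcal{P}=I$), and from $\mathcal{S}\mathcal{F}=I$ (here $\mathcal{S}$ has the special lower-right zero block): $E_{-+}F+S_-F_-=I$, $E_{-+}F_++S_-F_{-+}=0$, $S_+F=0$, $S_+F_+=I$ (plus mirrors from $\mathcal{F}\mathcal{S}=I$, notably $FE_{-+}+F_+S_+=I$, $FS_-=0$, $F_-E_{-+}+F_{-+}S_+=0$, $F_-S_-=I$).

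Then I would compute $\mathcal{T}\mathcal{G}$ block by block. For the $(1,1)$ entry: $T G + T_- G_- = P(E-E_+FE_-) + R_-S_-F_-E_-$. Using $PE_+ = -R_-E_{-+}$ (from the second $\mathcal{P}\mathcal{E}$ identity) turns $-PE_+FE_-$ into $R_-E_{-+}FE_-$, and then $E_{-+}F + S_-F_- = I$ collapses $R_-E_{-+}FE_- + R_-S_-F_-E_- = R_-E_-$, so the $(1,1)$ entry becomes $PE + R_-E_- = I$. The $(1,2)$ entry is $TG_+ + T_-G_{-+} = PE_+F_+ + R_-S_-(-F_{-+})$; again replace $PE_+ = -R_-E_{-+}$ to get $-R_-(E_{-+}F_+ + S_-F_{-+})$, which vanishes by the second $\mathcal{S}\mathcal{F}$ identity. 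The $(2,1)$ entry $T_+G + T_{+-}G_- = S_+R_+(E-E_+FE_-) + S_+R_{+-}S_-F_-E_-$: use $R_+E_+ = I - R_{+-}E_{-+}$ and $R_+E = -R_{+-}E_-$ to reduce it to $S_+R_{+-}(E_{-+}F + S_-F_-)E_- - S_+R_+E_+FE_-$... I would instead streamline this by first noting $S_+R_+E = -S_+R_{+-}E_-$ and grouping; the key collapse is again $E_{-+}F+S_-F_- = I$ together with $S_+\,\cdot$ applied to a combination that reproduces $T_+E$-type terms, and one uses $S_+F = 0$ where a stray $S_+R_+E_+F$ appears. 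The $(2,2)$ entry $T_+G_+ + T_{+-}G_{-+} = S_+R_+E_+F_+ - S_+R_{+-}S_-F_{-+}$: substitute $R_+E_+ = I - R_{+-}E_{-+}$ to get $S_+F_+ - S_+R_{+-}(E_{-+}F_+ + S_-F_{-+}) = S_+F_+ - 0 = I$ by $S_+F_+ = I$. The computation of $\mathcal{G}\mathcal{T} = I$ is entirely symmetric, using the mirror identities.

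The main obstacle is purely organizational: there is no single hard idea, but one must be disciplined about which of the sixteen block identities (eight from $\mathcal{P}$, eight from $\mathcal{S}$) to invoke at each step, and the $(2,1)$ block in particular requires chaining three substitutions ($R_+E = -R_{+-}E_-$, then $E_{-+}F+S_-F_- = I$, then $S_+F = 0$) in the right order to see the cancellation. An alternative, slightly cleaner route is to present $\mathcal{G}$ as arising from solving the Grushin system $\mathcal{T}\begin{pmatrix}u\\ u_-\end{pmatrix} = \begin{pmatrix}v\\ v_+\end{pmatrix}$: the first row gives $u = Ev - E_+(R_+u + \text{correction})$, one recognizes the auxiliary unknown $R_+ u \in \mathcal{H}_+$, feeds it through the $\mathcal{S}$-system to express it via $\mathcal{F}$, and reads off the four blocks of $\mathcal{G}$; this makes the formula \eqref{2gr.4} self-explanatory rather than verified ex post. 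I would present the direct block multiplication as the rigorous proof and add a one-line remark pointing to the Grushin-elimination heuristic for motivation.
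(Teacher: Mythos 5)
Your proof is correct, but it takes a genuinely different route from the paper. You verify ex post, by direct block multiplication, that the given matrix $\mathcal{G}$ in \eqref{2gr.4} satisfies $\mathcal{T}\mathcal{G}=1$ and $\mathcal{G}\mathcal{T}=1$, using the sixteen block identities coming from $\mathcal{P}\mathcal{E}=\mathcal{E}\mathcal{P}=1$ and $\mathcal{S}\mathcal{F}=\mathcal{F}\mathcal{S}=1$; I checked the four blocks of $\mathcal{T}\mathcal{G}$ (including the $(2,1)$ block you left in streamlined form: the chain $R_+E=-R_{+-}E_-$, $R_+E_+=1-R_{+-}E_{+-}$-type substitution, $S_+F=0$ and $E_{-+}F+S_-F_-=1$ does make it vanish) and the computation closes; the $\mathcal{G}\mathcal{T}=1$ half indeed follows from the mirror identities ($E_-P=-E_{-+}R_+$, $ER_-=-E_+R_{+-}$, $FS_-=0$, $FE_{-+}+F_+S_+=1$, $F_-E_{-+}+F_{-+}S_+=0$, $F_-S_-=1$), though in a final write-up you should display it rather than appeal to ``symmetry'', since it is not a formal transpose of the first half. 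The paper instead proves the proposition by the elimination argument you only sketch as motivation: it solves the system \eqref{2gr.prop.1} by introducing $\widetilde v_+=R_+u+R_{+-}S_-u_-$, inverting through $\mathcal{E}$, recognizing that $(\widetilde v_+,-u_-)$ solves the $\mathcal{S}$-problem with data $(-E_-v,v_+)$, and inverting through $\mathcal{F}$; this simultaneously yields unique solvability (hence bijectivity) and \emph{derives} the formula \eqref{2gr.4}, rather than requiring it as an input. The trade-off is that your verification is mechanical and self-contained but must be handed the answer and needs both one-sided checks, while the paper's Grushin elimination is shorter, explains where $\mathcal{G}$ comes from, and is the argument that generalizes \cite[Proposition 3.4]{SjZw07} to $R_{+-}\neq 0$ in the way later sections exploit. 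Either argument is acceptable; there is no gap in yours beyond the presentational points noted.
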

\begin{proof}
	We can essentially follow the proof of \cite[Proposition 3.4]{SjZw07}. We need 
	to solve 
	\begin{equation}\label{2gr.prop.1}
		\begin{cases}
			Pu+R_-S_-u_- = v \\
			S_+R_+u + S_+R_{+-}S_-u_-=v_+.
		\end{cases}
	\end{equation}
	Putting $\widetilde{v}_+=R_+u+R_{+-}S_-u_-$, the first equation 
	is equivalent to 
	\begin{equation*}
		\begin{cases}
			Pu+R_-S_-u_- = v \\
			R_+u+R_{+-}S_-u_-=\widetilde{v}_+, 
		\end{cases}
		\quad \text{i.e.} \quad 
		\mathcal{P} \begin{pmatrix} u \\ S_-u_- \\ \end{pmatrix} =
		\begin{pmatrix} v \\ \widetilde{v}_+\\ \end{pmatrix},
	\end{equation*}
	and hence to 
	\begin{equation}\label{2gr.prop.2}
		\begin{cases}
			u = Ev+E_+\widetilde{v}_+ \\
			S_-u_-=E_-v + E_{-+}\widetilde{v}_+.
		\end{cases}
	\end{equation}
	Therefore, we can replace $u$ by $\widetilde{v}_+$ and \eqref{2gr.prop.1} 
	is equivalent to 
	\begin{equation}\label{2gr.prop.3}
		\begin{pmatrix} 	E_{-+} & S_- \\ S_+ & 0 \\	\end{pmatrix} 
		\begin{pmatrix} \widetilde{v}_+ \\ -u_- \\ \end{pmatrix} =
		\begin{pmatrix} -E_-v \\ v_+ \\ \end{pmatrix}
	\end{equation}
	which can be solved by $\mathcal{F}$. Hence, \eqref{2gr.prop.3} is equivalent to 
	\begin{equation*}
		\begin{cases}
			\widetilde{v}_+ = - FE_-v + F_+v_+ \\ 
			-u_-= - F_-E_-v+F_{-+}v_+,
		\end{cases}
	\end{equation*}
	and \eqref{2gr.prop.2} gives the unique solution of \eqref{2gr.prop.1} 
	\begin{equation*}
		\begin{cases}
			u = (E - E_+FE_-)v + E_+F_+v_+ \\ 
			u_- = F_-E_-v - F_{-+}v_+. 
		\end{cases}
		\qedhere
	\end{equation*}
\end{proof}

\par 
\subsection{Grushin problem for $E_{-+}(z)$}\label{sec:RPG}
We want to apply Proposition \ref{prop:CGP} to ${\mathcal{P}}={\mathcal{P}}(z)={\mathcal{P}}_N(z)$ in
(\ref{gr.4}) with the inverse ${\mathcal{E}}={\mathcal{E}}_N(z)$ in
(\ref{gr.10}), where we sometimes drop the index $N$. We begin by constructing an 
invertible Grushin problem for $E_{-+}$:
\\
\par
Let $0\leq t_1 \leq \dots \leq t_M$ denote the singular values of $E_{-+}(z)$. Let $e_1, \dots, e_M$ denote 
an orthonormal basis of eigenvectors of $E_{-+}^*E_{-+}$ associated to the eigenvalues $t_1^2 \leq \dots \leq t_M^2$. 
Since $E_{-+}$ is Fredholm of index $0$, we have that $\dim \mathcal{N}(E_{-+}(z)) = \dim \mathcal{N}(E_{-+}^*(z))$. 
Using the spectral decomposition $\ell^2(J) = \mathcal{N}(E_{-+}^*E_{-+}) \oplus_{\perp} \mathcal{R}(E_{-+}^*E_{-+})$ together with the fact that $\mathcal{N}(E_{-+}^*E_{-+}) = \mathcal{N}(E_{-+}) $ and 
$\mathcal{R}(E_{-+}^*) = \mathcal{N}(E_{-+})^{\perp}$, it follows that 
$\mathcal{R}(E_{-+}^*) = \mathcal{R}(E_{-+}^*E_{-+})$. Similarly, we get that 
$\mathcal{R}(E_{-+}) = \mathcal{R}(E_{-+}E_{-+})^*$. One then easily checks that $E_{-+}: \mathcal{R}(E_{-+}^*E_{-+}) \to \mathcal{R}(E_{-+}E_{-+}^*)$ is a bijection. Similarly, we get that $E_{-+}^*: \mathcal{R}(E_{-+}E_{-+}^*) \to \mathcal{R}(E_{-+}^*E_{-+})$  
is a bijection. Let $f_1,\dots,f_{M_0}$ denote an orthonormal basis of  $\mathcal{N}(E_{-+}^*(z))$ and set 
\begin{equation*}
	f_j = t_j^{-1} E_{-+} e_j , \quad j=M_0+1, \dots, M.
\end{equation*}
Then, $f_1,\dots,f_M$ is an orthonormal basis of $\ell^2(J)$ comprised of eigenfunctions of $E_{-+}E_{-+}^*$ associated with 
the eigenvalues $t_1^2 \leq \dots \leq t_M^2$. In particular, $\sigma(E_{-+}E_{-+}^*) =  \sigma(E_{-+}^*E_{-+})$ and 
\begin{equation}\label{2gr.5.0}
	 E_{-+} e_j = t_j f_j, \quad E_{-+}^* f_j = t_je_j, \quad j=1, \dots , M.
\end{equation}
\par
Let $0\le t_1\le ...\le t_k$ be the singular values of $E_{-+}(z)$ in the
interval $[0,\tau ]$ for $\tau >0$ small. 
Let ${\mathcal{S}}_+,\, {\mathcal{S}}_-\subset \ell^2(J)$
be the corresponding (sums of) spectral subspaces for
$E_{-+}^*E_{-+}$ and $E_{-+}E_{-+}^*$ respectively, corresponding to
the eigenvalues $t_1^2\le t_2^2\le ... \le t_k^2$ in $[0,\tau
^2]$. Using \eqref{2gr.5.0}, we see that the restrictions (denoted by the same symbols)
$$
E_{-+}:{\mathcal{S}}_+\to {\mathcal{S}}_-,\ E_{-+}^*:{\mathcal{S}}_-\to {\mathcal{S}}_+, 
$$
have norms $\le \tau $. Also,
\begin{equation}\label{2gr.5.2}
E_{-+}:{\mathcal{S}}_+^\perp \to {\mathcal{S}}_-^\perp,\ E_{-+}^*: {\mathcal{S}
  }_-^\perp\to  {\mathcal{S}}_+^\perp  
\end{equation}
are bijective with inverses of norm $\le 1/\tau $.

Let $S_+$ be the orthogonal projection onto
${\mathcal{S}}_+$, viewed as an operator $\ell^2(J)\to {\mathcal{S}}_+$, whose
adjoint is the inclusion map ${\mathcal{S}}_+\to \ell^2(J)$. Let $S_-:{\mathcal{S}
  }_-\to \ell^2(J)$ be the inclusion map. Let ${\mathcal{S}}$ be the
operator in (\ref{2gr.2}) with ${\mathcal{H}}_\pm=\ell^2(J)$, corresponding
to the problem
\begin{equation}\label{2gr.5}
  \begin{cases}
    E_{-+}g+S_-g_-=h\in \ell^2(J),\\
    S_+g=h_+\in {\mathcal{S}}_+,
  \end{cases}
\end{equation}
for the unknowns $g\in \ell^2(J)$, $g_-\in {\mathcal{S}}_-$. Using the
orthogonal decompositions,
$$
\ell^2(J)={\mathcal{S}}_+^\perp\oplus {\mathcal{S}}_+,\ \ell^2(J)={\mathcal{S}
  }_-^\perp\oplus {\mathcal{S}}_-,
$$
we write $g=\sum_1^kg_je_j + g^{\perp}$ and $h=\sum_1^kh_jf_j + h^{\perp}$. Then, \eqref{2gr.5} 
is equivalent to 
\begin{equation*}
  \begin{cases}
    g^{\perp}=(E_{-+})^{-1}h^{\perp}\\
    \begin{pmatrix}g_j\\ g_-^j\end{pmatrix} =
\begin{pmatrix}0 &1\\ 1 & - t_j\end{pmatrix}
\begin{pmatrix}h_j \\ h_+^j\end{pmatrix}, ~~ j=1,\dots,M,
  \end{cases}
\end{equation*}
where we also used that $g_-=\sum_1^kg_-^jf_j$ and $h_+=\sum_1^kh_+^je_j$. 
It follows that
\begin{equation}\label{2gr.5.1}
  \begin{cases}
    g = (E_{-+})^{-1}h^{\perp} + \sum_1^k h_+^je_j \\ 
    g_- =  \sum_1^k h^j f_j - \sum_1^k t_j h_+^jf_j.
  \end{cases}
\end{equation}
Hence, the unique solution to \eqref{2gr.5} is given by 
\begin{equation}\label{2gr.6}
\begin{pmatrix}g\\ g_-\end{pmatrix}={\mathcal{F}}\begin{pmatrix}h\\
  h_+\end{pmatrix} =
\begin{pmatrix}F &F_+\\ F_- &F_{-+}\end{pmatrix}
\begin{pmatrix}h \\ h_+\end{pmatrix},
\end{equation}
where
\begin{equation}\label{2gr.7}
  \begin{split}
    &F=E_{-+}^{-1}\Pi _{{\mathcal{S}}_-^\perp},\ \ F_+=S_+^*,\\
    &F_-=S_-^*,\ \ F_{-+}=-{{E_{-+}}_\vert}_{{\mathcal{S}}_+}:\, {\mathcal{S}}_+\to {\mathcal{S}}_- .
\end{split}    
\end{equation}
Here $\Pi_{B}$ denotes the orthogonal projection onto the subspace $B$
of $A$, viewed as a self-adjoint operator $A\to A$. Notice that $F = \Pi_{\mathcal{S}_+^{\perp}} F$ and 
that 
\begin{equation}\label{2gr.7.1}
	F_{-+} = - \sum_1^k t_j f_j \circ e_j^*.
\end{equation}
Using as well \eqref{2gr.5.2}, we have
\begin{equation}\label{2gr.8}
\|F\|\le 1/\tau ,\ \|F_+\|, \|F_-\|\le 1, \|F_{-+}\|\le \tau .
\end{equation}
\subsection{Composing the Grushin problems}
From now on we assume that
\begin{equation}\label{det.1}
0<\alpha \ll 1,\ \ \epsilon (M)\le \alpha /2,
\end{equation}
and the estimates below will be uniformly valid for $z\in K\setminus
\gamma _\alpha $, $N\gg 1$, where $K$ is some fixed relatively compact open set in
${\bf C}$ and
\begin{equation}\label{det.2}
\gamma _\alpha =\{ z\in {\bf C};\, \mathrm{dist\,}(z,\gamma )\le\alpha
\},\ \ \gamma =p(S^1).
\end{equation}
We apply Proposition \ref{prop:CGP} to ${\mathcal{P}_N}$ in
(\ref{gr.4}) with the inverse $\mathcal{E}_N$ in \eqref{gr.10}, and to 
$\mathcal{S}$ defined in \eqref{2gr.5} with inverse in $\mathcal{F}$ in \eqref{2gr.6}.
Let $z\in K\backslash\gamma_\alpha$, then  
\begin{equation}\label{2gr.10.0}
  {\mathcal{T}_N}=\begin{pmatrix} P_N-z &R_-S_-\\
S_+R_+ &S_+R_{+-}S_-
\end{pmatrix} =
\begin{pmatrix}T &T_-\\ T_+ &T_{+-}\end{pmatrix}
:
L^2(I_N)\times {\mathcal{S}}_-\to L^2(I_N)\times {\mathcal{S}}_+,
\end{equation}
defined as in (\ref{2gr.3}), is bijective with the bounded inverse
\begin{equation}\label{2gr.10.1}
  {\mathcal{G}_N}=\begin{pmatrix}G^N &G_+^N\\ G_-^N &G_{-+}^N\end{pmatrix}=
  \begin{pmatrix}E^N-E_+^NFE_-^N &E_+^NF_+\\ F_-E_-^N &-F_{-+}\end{pmatrix}.
\end{equation}
Since $S_\pm$ have norms $\le 1$, we get
\begin{equation}\label{2gr.9}
\| T_\pm\| \le \| R_\pm\| = \mO(1),
\end{equation}
uniformly in $N$, $\alpha$ and $z\in K$. 
Also, since the norms of $E^N,E_+^N, E_-^N$ are $\leq 2/\alpha$ (uniformly as $N\to \infty$) by \eqref{gr.12}, 
we get from (\ref{2gr.4}), (\ref{2gr.8}), that
\begin{equation}\label{2gr.10}
\| G^N\|\le \frac{2}{\alpha}+\frac{4}{\tau \alpha^2},\  \|G^N_{-+}\|\le \tau ,\
\|G^N_\pm\|\le \frac{2}{\alpha}.
\end{equation}
\begin{prop}\label{prop:gr1}
  Let $K\Subset \C$ be an open relatively compact set, let $z\in K\backslash \gamma_\alpha$, 
  and let $\tau>0$ be as in the definition of the Grushin problem \eqref{2gr.5}.  
  Then, for $\tau>0$ small enough, depending only on $K$, we have that $G_+$ is 
  injective and $G_-$ is surjective. Moreover, there exists a constant $C>0$, depending only on $K$, 
  such  that for all $z\in K\backslash \gamma_{\alpha}$ the singular values $s_j^+$ of $G_+$, 
  and $s_j^-$ of $G_-^*$ satisfy 
 \begin{equation}\label{2gr.10.2}
\frac{1}{C}\le s_j^\pm \le \frac{2}{\alpha}, \quad 1 \leq j \leq k(z) =\mathrm{rank}(G_\pm).
\end{equation}
\end{prop}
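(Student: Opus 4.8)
The plan is to read off the size of the singular values of $G_+^N = E_+^N F_+$ and $G_-^N = F_- E_-^N$ directly from the explicit formulas in \eqref{2gr.10.1} together with the spectral decomposition of $E_{-+}$ set up in Subsection \ref{sec:RPG}. Recall from \eqref{2gr.7} that $F_+ = S_+^*$ is the inclusion $\mathcal{S}_+ \hookrightarrow \ell^2(J)$ and $F_- = S_-^*$ is the inclusion $\mathcal{S}_- \hookrightarrow \ell^2(J)$, so these are isometries onto their (finite-dimensional) ranges. Hence $G_+^N = E_+^N S_+^*$ and $G_-^N = S_-^* E_-^N$. The upper bound $s_j^\pm \le 2/\alpha$ is then immediate: $\|G_+^N\| \le \|E_+^N\|\,\|S_+^*\| \le 2/\alpha$ by \eqref{gr.12} (with $d_N(z) - \epsilon(M) \ge \alpha - \alpha/2 = \alpha/2$, valid for $z \in K \setminus \gamma_\alpha$ and $N \gg 1$ since $p_N(S_{N+M})$ converges to $p(S^1) = \gamma$ uniformly on compacts), and likewise $\|G_-^N\| = \|(G_-^N)^*\| \le 2/\alpha$. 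The rank is exactly $k = k(z) = \dim \mathcal{S}_\pm$, the number of singular values of $E_{-+}(z)$ in $[0,\tau]$, since $S_\pm^*$ are injective with $k$-dimensional range. This also shows $G_+$ is injective and $G_-$ is surjective.

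The substantive point is the lower bound $s_j^\pm \ge 1/C$. For this I would use the Grushin identities relating $\mathcal{G}_N$ to $\mathcal{T}_N$ and, more usefully, the structure of $\mathcal{P}_N(z)^{-1} = \mathcal{E}_N(z)$. The key observation is that $E_+^N$, as an operator $\ell^2(J) \to \ell^2(I_N)$, together with $E_{-+}^N$ satisfies the well-posedness relations of the first Grushin problem \eqref{gr.4}; in particular, from $\mathcal{P}_N(z)\mathcal{E}_N(z) = 1$ one extracts $R_- E_{-+}^N + (P_N - z) E_+^N = 0$ and $R_+ E_+^N + R_{+-} E_{-+}^N = 1_{\ell^2(J)}$. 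The second identity, restricted appropriately, shows that $E_+^N$ cannot be too small on $\mathcal{S}_+$: if $E_+^N e_j$ were small for some unit $e_j \in \mathcal{S}_+$, then $R_+ E_+^N e_j = e_j - R_{+-} E_{-+}^N e_j = e_j - R_{+-} t_j f_j$, and since $t_j \le \tau$ is small while $\|R_+\| = \mathcal{O}(1)$, this forces $\|E_+^N e_j\| \gtrsim 1 - \mathcal{O}(\tau) \ge 1/C$ for $\tau$ small enough depending only on $K$ (because $\|R_{+-}\| = \mathcal{O}(1)$ uniformly for $z \in K$). An analogous argument with $\mathcal{E}_N \mathcal{P}_N = 1$, i.e. $E_-^N R_- + E_{-+}^N R_{+-} = 1_{\ell^2(J)}$ and $E_-^N(P_N-z) + E_{-+}^N R_+ = 0$, gives the lower bound for $(G_-^N)^* = (E_-^N)^* S_-^*$: for a unit $f_j \in \mathcal{S}_-$, $R_-^* (E_-^N)^* f_j = f_j - R_{+-}^* (E_{-+}^N)^* f_j = f_j - t_j R_{+-}^* e_j$, so $\|(E_-^N)^* f_j\| \gtrsim 1 - \mathcal{O}(\tau) \ge 1/C$.

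The main obstacle is bookkeeping rather than conceptual: one must be careful that the bound $1/C$ for the singular values is a bound on \emph{all} $k$ of them, i.e. that $G_+|_{\mathcal{S}_+}$ and $(G_-^*)|_{\mathcal{S}_-}$ are uniformly bounded below as maps out of $\mathcal{S}_\pm$, not merely nonzero. This follows because the lower bound $\|E_+^N e_j\| \ge 1/C$ was derived for the specific orthonormal basis $e_1, \dots, e_k$ of $\mathcal{S}_+$, but one wants it for arbitrary unit vectors in $\mathcal{S}_+$; here one uses that the estimate $\|R_+ v\| \ge 1 - \mathcal{O}(\tau)$ holds for every unit $v$ in $\mathcal{S}_+$ (since $E_{-+}$ has norm $\le \tau$ on all of $\mathcal{S}_+$ by construction, not just on basis vectors), whence $\|E_+^N v\| \ge (1 - \mathcal{O}(\tau))/\|R_+\| \ge 1/C$ for all unit $v \in \mathcal{S}_+$, which is exactly the statement $s_j^+ \ge 1/C$. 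The same remark applies to $G_-$. One also needs $d_N(z) \ge \alpha$ for $z \in K \setminus \gamma_\alpha$ and $N$ large, which follows from \eqref{gr.3} and the uniform convergence $p_N \to p$ on $S^1$ guaranteed by \eqref{unp.2}; this is where the hypothesis "$\tau$ small enough depending only on $K$" and the implicit "$N \gg 1$" enter.
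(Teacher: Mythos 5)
Your proof is correct and is essentially the paper's own argument: the paper produces a uniformly bounded left inverse of $G_+$ (and of $G_-^*$) from $T_+G_+=1-T_{+-}G_{-+}$ with $\|T_{+-}G_{-+}\|\le \|R_{+-}\|\tau\le 1/2$, which is exactly the $S_\pm$-compressed form of the identities $R_+E_++R_{+-}E_{-+}=1$ and $E_-R_-+E_{-+}R_{+-}=1$ that you exploit, and the upper bound comes from \eqref{2gr.10} in both cases. Two cosmetic points only: $F_-=S_-^*$ is the orthogonal projection of $\ell^2(J)$ onto $\mathcal{S}_-$ (the inclusion is $S_-$, so $G_-^*=E_-^*S_-$), and the claim in your first paragraph that injectivity of $S_\pm^*$ already yields injectivity of $G_+$ and $\mathrm{rank}\,G_\pm=k$ should be deferred, since it only follows from the lower bound you establish afterwards.
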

\begin{proof}
We begin with the injectivity of $G_+$. From
\begin{equation}\label{2gr.11}
  \begin{pmatrix}P &T_-\\ T_+ &T_{+-}\end{pmatrix}
  \begin{pmatrix}G &G_+\\ G_- &G_{-+}\end{pmatrix}=1,
\end{equation}
we have $T_+G_++T_{+-}G_{-+}=1$ which we write
$T_+G_+=1-T_{+-}G_{-+}$. Here
$$
\|T_{+-}G_{-+}\|\le \| R_{+-}\|\tau ={\mathcal{O}}(\tau ),
$$
where we used that $\| R_{-+} \| \leq \|p(\tau)-z\| = \mO(1)\| m\|_{\ell^1}$, thus the error term above 
only depends on $K$. 
Choosing $\tau >0$ small enough, depending on $K$ but not on $N$, we get that 
$\|T_{+-}G_{+-}\|\le 1/2$. Then $1-T_{+-}G_{-+}$ is bijective with $\|
(1-T_{+-}G_{-+})^{-1}\|\le 2$ and $G_+$ {\it has the left inverse}
\begin{equation}\label{2gr.12}
(1-T_{+-}G_{-+})^{-1}T_+
\end{equation}
{\it of norm} $\le 2\| R_+\|={\mathcal{O}}(1)$, depending only on $K$. 
\\
\par
Now we turn to the surjectivity of $G_-$. From
$$
\begin{pmatrix}G & G_+\\ G_-& G_{-+}\end{pmatrix}
\begin{pmatrix}P-z &T_-\\ T_+ &T_{+-}\end{pmatrix}= 1,
$$
we get
$$
\begin{pmatrix}(P-z)^* &T_+^*\\ T_-^* &T_{+-}^*\end{pmatrix}
\begin{pmatrix}G^* & G_-^*\\ G_+^*& G_{-+}^*\end{pmatrix}
= 1,
$$
and as above we then see that $G_-^*$ has the left inverse
$(1-T_{+-}^*G_{-+}^*)^{-1}T_-^*$. Hence $G_-$ {\it has  the right inverse}
\begin{equation}\label{2gr.13}
T_-(1-G_{-+}T_{+-})^{-1},
\end{equation}
of norm $\leq 2\| R_-\| = \mO(1)$, depending only on $K$. 
\\
\par
The lower bound on the singular values follows from the estimates on the left inverses of $G_+$ and $G_-^*$, 
and the upper bound follows from \eqref{2gr.10}.
\end{proof}
\section{Determinants}\label{det}
\setcounter{equation}{0}

We continue working under the assumptions \eqref{det.1}, \eqref{det.2}. 
Additionally, we fix $\tau>0$ sufficiently small (depending only 
on the fixed relatively compact set $K\Subset \C$) so that $\| T_{+-}G_{-+}\|$,
$\|G_{-+}T_{+-}\|$ (both $={\mathcal{O}}(\tau )$) are $\le 1/2$, which 
implies that $G_+$ is injective and $G_+$ is surjective, see Proposition \ref{prop:gr1}. 
\par
From now on we will work with $z\in K\backslash \gamma_{\alpha}$. 
The constructions and estimates in Section \ref{gr} are then uniform
in $z$ for $N\gg 1$ and the same holds for those in Section \ref{2gr}. 
\begin{remark}
	To get the $o(N)$ error term in Theorem \ref{main}, we will take 
	$\alpha >0 $ arbitrarily small, and $M>1$ large enough (but fixed) so that 
	$\varepsilon(M) \leq \alpha/2$, see \eqref{unp.15} as well as $N>1$ sufficiently large. 
	In the following, the error terms will typically depend on $\alpha$, although we will not always denote 
	this explicitly, however they will be uniform in $N>1$ and in $z\in K\backslash\gamma_{\alpha}$. 
\end{remark}
\par
\subsection{The unperturbed operator}
For $z\in K\setminus \gamma _\alpha $, we have $d_N(z)\ge \alpha
$ and (\ref{gr.7}), (\ref{gr.8}) give
\begin{equation}\label{det.3}
| \det {\mathcal{P}}_N(z)|\le e^{\epsilon (M)/\alpha }|\det (p_N(\tau
)-z)| ,
\end{equation}
\begin{equation}\label{det.4}
 |\det (p_N(\tau )-z)|
\le e^{2\epsilon (M)/\alpha }
 | \det {\mathcal{P}}_N(z)|,
\end{equation}
where we also used that
$$
\frac{\epsilon (M)}{d_N(z)-\epsilon (M)}\le \frac{\epsilon (M)}{\alpha
-\epsilon (M)}\le \frac{2\epsilon (M)}{\alpha },
$$
by the second inequality in (\ref{det.1}).  Recall here that
$p_N(\tau )$ acts on $\ell^2({\bf Z}/\widetilde{N}{\bf Z})$,
$\widetilde{N}=N+M$.

\par 
By the Schur complement formula, we have
\begin{equation}\label{det.5}
  \begin{split}
  \det (P_N-z) = \det {\mathcal{P}}(z)\, \det E_{-+}(z),\\
\det (P_N-z) = \det {\mathcal{T}}(z)\, \det G_{-+}(z),
\end{split}
\end{equation}
so
\begin{equation}\label{det.6}
\frac{\det {\mathcal{T}}}{\det {\mathcal{P}}}=\frac{\det E_{-+}}{\det G_{-+}}.
\end{equation}
Recall from the Section \ref{sec:RPG} that the singular values of $E_{-+}$
are denoted by $0\le t_1\le t_2\le \dots \leq t_M$ and that those of $G_{-+}$
are $t_1,...,t_k$, where $k=k(z,N)$ is determined by the condition
$t_k\le \tau <t_{k+1}$. Thus
$$
\left| \frac{\det E_{-+}}{\det G_{-+}} \right|=\prod_{k+1}^M t_j$$
and we get (since $\tau \ll 1$)
$$
\tau ^M\le \left| \frac{\det E_{-+}}{\det G_{-+}} \right| \le \left( \frac{2}{\alpha} \right)^M.
$$
Since $\tau >0$ is small, but fixed depending only on $K$, we have uniformly for 
$z\in K\setminus \gamma _\alpha $, $N\gg 1$:
\begin{equation}\label{det.7}
\left| \ln |\det E_{-+}|-\ln |\det G_{-+}|\right|\le \mO(1) %
\end{equation}
and by (\ref{det.6})
\begin{equation}\label{det.8}
\left| \ln |\det {\mathcal{T}}|-\ln |\det {\mathcal{P}}|\right|\le \mO(1).%
\end{equation}
From (\ref{det.3}), (\ref{det.4}), we get
\begin{equation}\label{det.9}
\left| \ln |\det {\mathcal{P}}|-\ln |\det (p_N(\tau )-z)|\right|\le  \mO(1), %
\end{equation}
hence
\begin{equation}\label{det.10}
\left| \ln |\det {\mathcal{T}}|-\ln |\det (p_N(\tau )-z)|\right|\le  \mO(1). %
\end{equation}
\subsection{The perturbed operator}
We next extend the estimates to the case of a perturbed operator
\begin{equation}\label{det.11}
P_N^\delta =P_N+\delta Q,
\end{equation}
where $Q:\ell^2(I_N)\to \ell^2(I_N)$ satisfies
\begin{equation}\label{det.12}
\delta \| Q\|\ll 1.
\end{equation}
\begin{remark}
	Later we will restrict in probability to the event that $\| Q\|_{\mathrm{HS}}\le \sqrt{C_1N}$ for a
	certain constant $C_1>0$ and we then require that $\delta \sqrt{N} \ll 1$
	which implies (\ref{det.12}), see also \eqref{mark1}.
\end{remark}
\begin{prop}\label{prop:det1}
	Let $K\Subset \C$ be an open relatively compact set and suppose 
	that \eqref{det.1}, \eqref{det.2} hold. 
	Recall \eqref{gr.4}, if $\delta \| Q\| \alpha^{-1} \ll 1$, then for all $z\in K\backslash\gamma_{\alpha}$
	\begin{equation}\label{det.13}
	\mathcal{P}_N^\delta =\begin{pmatrix}P_N^\delta -z &R_-\\ R_+
 	 &R_{+-}(z)\end{pmatrix}
	={\mathcal{P}}+\begin{pmatrix}\delta Q &0\\0 &0\end{pmatrix},
	\end{equation}
	is bijective with bounded inverse 
	\begin{equation}\label{det.13.1}
	\mathcal{E}_N^\delta =\begin{pmatrix}E^\delta &E^\delta_+\\ E^\delta_-
 	 &E^\delta_{+-}\end{pmatrix}.
	\end{equation}
	Recall \eqref{2gr.10.0}, if $\delta \| Q\| \alpha^{-2} \ll 1 $, then 
	for all $z\in K\backslash\gamma_{\alpha}$
	\begin{equation}\label{det.13.2}
	\mathcal{T}_N^\delta =\begin{pmatrix}P_N^\delta -z &T_-\\ T_+
 	 &T_{+-}\end{pmatrix}
	={\mathcal{T}_N}+\begin{pmatrix}\delta Q &0\\0 &0\end{pmatrix}.
	\end{equation}
	is bijective with bounded inverse 
	\begin{equation}\label{det.13.3}
	\mathcal{G}_N^\delta =\begin{pmatrix}G^\delta &G^\delta_+\\ G^\delta_-
 	 &G^\delta_{+-}\end{pmatrix},
	\end{equation}
	with 
	\begin{equation}\label{det.13.4}
		G_{-+}^\delta (z)=G_{-+}-G_-\delta Q(1+G\delta Q)^{-1}G_+.
	\end{equation}
	Moreover, $\| \mathcal{E}_N^\delta\| \leq 4/\alpha$, $\| \mathcal{G}_N^\delta\| \leq \mO(\alpha^{-2})$, 
	uniformly in $z\in K\backslash \gamma_{\alpha}$ and $N>1$.
\end{prop}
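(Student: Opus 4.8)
The plan is to derive everything from the standard perturbation series for Grushin problems together with the a~priori bounds already established. First I would treat $\mathcal{P}_N^\delta$. We have written $\mathcal{P}_N^\delta = \mathcal{P}_N(z) + \mathrm{diag}(\delta Q, 0)$, so
\[
\mathcal{P}_N^\delta = \mathcal{P}_N(z)\Bigl(1 + \mathcal{E}_N(z)\,\mathrm{diag}(\delta Q,0)\Bigr).
\]
Since $\|\mathcal{E}_N(z)\| \le (d_N(z)-\epsilon(M))^{-1} \le 2/\alpha$ on $K\setminus\gamma_\alpha$ by \eqref{gr.12} and \eqref{det.1}, the operator $\mathrm{diag}(\delta Q,0)$ has norm $\delta\|Q\|$, so $\|\mathcal{E}_N\,\mathrm{diag}(\delta Q,0)\| \le 2\delta\|Q\|/\alpha \ll 1$ by hypothesis. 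Hence the Neumann series converges, $1 + \mathcal{E}_N\,\mathrm{diag}(\delta Q,0)$ is invertible with inverse of norm $\le 2$, and therefore $\mathcal{P}_N^\delta$ is bijective with
\[
\mathcal{E}_N^\delta = \bigl(1 + \mathcal{E}_N\,\mathrm{diag}(\delta Q,0)\bigr)^{-1}\mathcal{E}_N,
\qquad \|\mathcal{E}_N^\delta\| \le 2\cdot \tfrac{2}{\alpha} = \tfrac{4}{\alpha}.
\]
This gives the first block of claims.

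Next I would do the same for $\mathcal{T}_N^\delta$, but now comparing against $\mathcal{T}_N$ rather than $\mathcal{P}_N$: write $\mathcal{T}_N^\delta = \mathcal{T}_N(1 + \mathcal{G}_N\,\mathrm{diag}(\delta Q,0))$. The relevant bound is $\|\mathcal{G}_N\| \le \mathcal{O}(\alpha^{-2})$ from \eqref{2gr.10} (the dominant term being $4/(\tau\alpha^2)$ with $\tau$ fixed), so under the stronger hypothesis $\delta\|Q\|\alpha^{-2}\ll 1$ the perturbation $\mathcal{G}_N\,\mathrm{diag}(\delta Q,0)$ has norm $\ll 1$, the Neumann series converges, and $\mathcal{T}_N^\delta$ is bijective with $\mathcal{G}_N^\delta = (1+\mathcal{G}_N\,\mathrm{diag}(\delta Q,0))^{-1}\mathcal{G}_N$ of norm $\mathcal{O}(\alpha^{-2})$. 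Alternatively — and this is cleaner for the last formula — one observes that $\mathcal{T}_N^\delta$ has exactly the Grushin-block structure \eqref{2gr.3} obtained from applying Proposition~\ref{prop:CGP} to $\mathcal{P}_N^\delta$ (whose inverse is $\mathcal{E}_N^\delta$) composed with the \emph{same} auxiliary problem $\mathcal{S}$ from \eqref{2gr.5}, since the off-diagonal and corner blocks $R_\pm, R_{+-}$ are untouched by the perturbation. Proposition~\ref{prop:CGP} then applies provided the composed auxiliary problem $\mathcal{S}$ built from $E_{-+}^\delta$ is still invertible; but since $\|\mathcal{E}_N^\delta - \mathcal{E}_N\| = \mathcal{O}(\delta\|Q\|\alpha^{-2})$ we have $\|E_{-+}^\delta - E_{-+}\| = \mathcal{O}(\delta\|Q\|\alpha^{-2})\ll 1$, so the spectral-subspace construction of Section~\ref{sec:RPG} still goes through for $\tau$ fixed, and \eqref{2gr.4} gives $\mathcal{G}_N^\delta$ directly.

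Finally, for formula \eqref{det.13.4} I would extract the bottom-right block of $\mathcal{G}_N^\delta$ explicitly. The quickest route: apply the Schur/Grushin reduction to $\mathcal{T}_N^\delta = \mathcal{T}_N + \mathrm{diag}(\delta Q,0)$ directly. Writing $\mathcal{T}_N^\delta\begin{pmatrix}u\\u_-\end{pmatrix} = \begin{pmatrix}v\\v_+\end{pmatrix}$, i.e. $(T+\delta Q)u + T_- u_- = v$ and $T_+ u + T_{+-}u_- = v_+$, and using that $\mathcal{G}_N$ inverts $\mathcal{T}_N$ so that $Tu + T_-u_- = v - \delta Q u$ has solution $u = G(v-\delta Qu) + G_+ v_+$, $u_- = G_-(v-\delta Qu) + G_{-+}v_+$. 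Solving the first line for $u$: $(1+G\delta Q)u = Gv + G_+ v_+$, so $u = (1+G\delta Q)^{-1}(Gv+G_+v_+)$; substituting into the expression for $u_-$ and reading off the coefficient of $v_+$ gives $G_{-+}^\delta = G_{-+} - G_-\delta Q(1+G\delta Q)^{-1}G_+$, as claimed (note $1+G\delta Q$ is invertible since $\|G\|\delta\|Q\| = \mathcal{O}(\alpha^{-2}\delta\|Q\|)\ll 1$). I expect the only genuine subtlety — not a real obstacle — to be bookkeeping the precise power of $\alpha$: one must check that it is the \emph{second} Grushin problem (with its $\tau^{-1}$, and $\tau$ fixed depending only on $K$) that forces $\alpha^{-2}$ rather than $\alpha^{-1}$ in the hypothesis for $\mathcal{T}_N^\delta$, whereas $\alpha^{-1}$ suffices for $\mathcal{P}_N^\delta$; everything else is a routine Neumann-series argument.
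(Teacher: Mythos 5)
Your proposal is correct and follows essentially the same route as the paper: a Neumann series perturbation argument using the a priori bounds $\|\mathcal{E}_N\|\le 2/\alpha$ from \eqref{gr.12} and $\|\mathcal{G}_N\|=\mO(\alpha^{-2})$ from \eqref{2gr.10} (with $\tau$ fixed), followed by reading off the $(-,+)$ block to obtain \eqref{det.13.4}. The only cosmetic difference is that you factor $\mathcal{P}_N^\delta=\mathcal{P}_N\bigl(1+\mathcal{E}_N\,\mathrm{diag}(\delta Q,0)\bigr)$, which yields a two-sided inverse directly, whereas the paper constructs a right inverse and invokes the Fredholm index $0$ property to conclude it is also a left inverse.
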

\begin{proof}
	We sometimes drop the subscript $N$. By \eqref{gr.10}, 
	\begin{equation*}
	\mathcal{P}^\delta \mathcal{E} = 1 +
	\begin{pmatrix}\delta Q E&\delta Q E_+\\0 &0\end{pmatrix}.
	\end{equation*}
	By \eqref{gr.12}, it follows that $\|E\| \leq 2/\alpha$, so if $\delta \| Q\| \alpha^{-1} \ll 1$, 
	then by Neumann series argument, the above is invertible and 
	\begin{equation}\label{det.13.5}
	\mathcal{E} \left(1 +
	\begin{pmatrix}\delta Q E&\delta Q E_+\\0 &0\end{pmatrix} \right)^{-1}
	\end{equation}
	is a right inverse of $\mathcal{P}^\delta$, of norm $\leq 2 \| \mathcal{E} \| \leq 4/\alpha$. 
	Since $\mathcal{P}^\delta$ is Fredholm of index $0$, this is also a left inverse. 
	The proof for $\mathcal{T}_N^\delta$ is similar, using that $\| G\| =\mO(\alpha^{-2})$ 
	by \eqref{2gr.10} , since $\tau>0$ is fixed. Finally, the expression \eqref{det.13.4} follows 
	easily from expanding \eqref{det.13.5}.
\end{proof}
We drop the subscript $N$ until further notice. By \eqref{det.13.2}, we have
\begin{equation}\label{det.14}
\|{\mathcal{T}}-{\mathcal{T}}^\delta \|_{\mathrm{tr}}\le \delta \| Q\|_{\mathrm{tr}}.
\end{equation}
Write,
$$
{\mathcal{T}}^\delta ={\mathcal{T}}(1-{\mathcal{T}}^{-1}({\mathcal{T}}-{\mathcal{T}}^\delta )),
$$
where
\begin{equation}\label{det.15}
\| {\mathcal{T}}^{-1}({\mathcal{T}}-{\mathcal{T}}^\delta )\|_{\mathrm{tr}}\le {\mathcal{O}
  }(\delta )\| Q\|_{\mathrm{tr}}.
\end{equation}
Here, we used that $\|{\mathcal{T}}^{-1}\| = \| \mathcal{G} \| = \mO(1)$, by 
\eqref{2gr.10} and the fact that $\tau >0$ is fixed. We recall that the estimates here depend on $\alpha$, 
yet are uniform in $z\in K\backslash \gamma_{\alpha}$ and $N>1$. It follows that
$$
| \det (1-{\mathcal{T}}^{-1}({\mathcal{T}}-{\mathcal{T}}^\delta ))|\le \exp \| {\mathcal{T}
  }^{-1}({\mathcal{T}}-{\mathcal{T}}^\delta )\|_{\mathrm{tr}}\le \exp ({\mathcal{O}
  }(\delta )\| Q\|_{\mathrm{tr}}),
$$
and
\begin{equation}\label{det.16}
\begin{split}
|\det {\mathcal{T}}_\delta | 
&=|\det {\mathcal{T}}| |\det (1-{\mathcal{T} }^{-1}({\mathcal{T}}-
	{\mathcal{T}}^\delta ))|\\ 
&\le \exp ({\mathcal{O}}(\delta )\|
	Q\|_{\mathrm{tr}})|\det {\mathcal{T}}|.
\end{split}
\end{equation}
\par 
Similarly from the identity
$$
{\mathcal{T}} ={\mathcal{T}}^\delta (1-{\mathcal{T}}_\delta ^{-1}({\mathcal{T}}^\delta-{\mathcal{T}} )),
$$
(putting $\delta $ as a subscript whenever convenient), we get
\begin{equation}\label{det.17}
|\det {\mathcal{T}} |\le \exp ({\mathcal{O}}(\delta )\|
Q\|_{\mathrm{tr}})|\det {\mathcal{T}}^\delta |,
\end{equation}
thus
\begin{equation}\label{det.18}
\left| \ln |\det {\mathcal{T}}_\delta |-\ln |{\mathcal{T}}| \right| \le {\mathcal{O}
  }(\delta )\| Q\|_{\mathrm{tr}}.
\end{equation}
Assume that (uniformly in $N>1$ and independently of $\alpha$)
\begin{equation}\label{det.19}
	\delta \| Q\|_{\mathrm{tr}}\le {\mathcal{O}}(1)
\end{equation}
and recall (\ref{det.10}). Then 
\begin{equation}\label{det.20}
\left| \ln |\det {\mathcal{T}}_\delta |-\ln |\det (p_N(\tau )-z)|\right|\le {\mathcal{O}}(1).
\end{equation}
Notice that the error term depends on $\alpha$. 
Using also the general identity (cf.\ (\ref{det.5})),
\begin{equation}\label{det.21}
  \begin{split}
  \det (P_N^\delta -z) = \det {\mathcal{T}}^\delta (z)\, \det G^\delta _{-+}(z),
\end{split}
\end{equation}
we get
\begin{equation}\label{det.22}
  \ln |\det (P_N^\delta -z)|=
  \ln |\det (p_N(\tau )-z)|+\ln |\det G_{-+}^\delta |+{\mathcal{O}}(1),
\end{equation}
uniformly for $z\in K\setminus \gamma _\alpha $, $N\gg 1$.

\section{Lower bounds with probability close to 1}\label{lb}
\setcounter{equation}{0}

\par We now adapt the discussion in \cite[Section 5]{SjVo19} to ${\mathcal{T}
  }^\delta $. Let
\begin{equation}\label{lb.1}
P_N^\delta =P_N+\delta Q_\omega ,\ \ Q_\omega =(q_{j,k}(\omega
))_{1\le j,k\le N},
\end{equation}
where $0\le \delta \ll 1$ and $q_{j,k}(\omega )\sim {\mathcal{N}}(0,1)$ are
independent normalized complex Gaussian random variables. By the Markov 
inequality, we have
\begin{equation}\label{lb.3}
{\bf P}[\| Q_\omega \|_{\mathrm{HS}}\le \sqrt{C_1N} ]\ge 1-e^{-N},
\end{equation}
for some universal constant $C_1>0$. In the following we restrict the
attention to the case when
\begin{equation}\label{lb.2}
\| Q_\omega \|_{\mathrm{HS}}\le \sqrt{C_1N}  ,
\end{equation}
and (as before) $z\in K\setminus \gamma _\alpha $, $N\gg 1$. 
We assume that
\begin{equation}\label{lb.5}
\delta \ll N^{-1}.
\end{equation}
Then
$$
\delta \| Q\|_{\mathrm{tr}}\le \delta N^{1/2}\| Q\|_{\mathrm{HS}}\le
\delta C_1N \ll 1,
$$
and the estimates of the previous sections apply.

\par Let ${\mathcal{Q}}_{C_1N}$ be the set of matrices satisfying
(\ref{lb.2}). As in \cite[Section 5.3]{SjVo19} we study the map \eqref{det.13.4}, i.e. 
\begin{equation}\label{lb.6}
\begin{split}
{\mathcal{Q}}_{C_1N}\ni Q\mapsto G_{-+}^\delta (z)
&=G_{-+}-G_-\delta
Q(1+G\delta Q)^{-1}G_+\\
&=G_{-+}-\delta G_-(Q+T(z,Q))G_+,
\end{split}
\end{equation}
where
\begin{equation}\label{lb.7}
T(z,Q)=\sum_1^\infty (-\delta )^nQ(GQ)^n,
\end{equation}
and notice first that by \eqref{2gr.10}
\begin{equation}\label{lb.8}
\| T\|_{\mathrm{HS}}\le {\mathcal{O}}(\delta\alpha^{-2}N ).
\end{equation}
We strengthen the assumption (\ref{lb.5}) to
\begin{equation}\label{lb.9}
  \delta \ll N^{-1}\alpha^2.
\end{equation}

\par At the end of Section \ref{2gr} we have established the uniform
injectivity and surjectivity respectively for $G_+$ and $G_-$. This
means that the singular values $s_j^\pm$ of $G_\pm$ for $1\le j\le
k(z)=\mathrm{rank\,}(G_-)=\mathrm{rank\,}(G_+) $ satisfy
\begin{equation}\label{lb.10}
\frac{1}{C}\le s_j^\pm \le \frac{2}{\alpha}
\end{equation}
This corresponds to \cite[(5.27)]{SjVo19} and
the subsequent discussion there carries over to the present
situation with the obvious modifications. Similarly to \cite[(5.42)]{SjVo19} we 
strengthen the assumption on $\delta $ to
\begin{equation}\label{lb.11}
\delta \ll N^{-3/2}\alpha^2
\end{equation}
\begin{remark}
	If following exactly the proof in \cite[Section 5.3]{SjVo19} we would need to suppose 
	that $\delta \ll N^{-3}\alpha^2$. However, it is sufficient for our purposes to work with 
	$\| Q_\omega \|_{\mathrm{HS}}\le \sqrt{C_1N} $ which holds with probability \eqref{lb.3} 
	instead of $\| Q_\omega \|_{\mathrm{HS}}\le C_1N $ as we did in \cite[Section 5.3]{SjVo19}. 
	This change allows us to make the weaker assumption \eqref{lb.11} while still being able to 
	follow the proof in \cite[Section 5.3]{SjVo19}.
\end{remark}
Notice, that assumption \eqref{lb.11} is stronger than the assumptions on $\delta $ 
in Proposition \ref{prop:det1}. 
The same reasoning as in \cite[Section 5.3]{SjVo19}  leads to the following adaptation of
Proposition 5.3 in \cite{SjVo19}:
\begin{prop}\label{lb1}
Let $K\subset {\bf C}$ be compact, $0<\alpha \ll 1$ and choose $M$ so
that $\epsilon (M)\le \alpha /2$. Let $\delta $ satisfy
(\ref{lb.11}). Then the second Grushin problem with matrix ${\mathcal{T}
  }^\delta $ is
well posed with a bounded inverse ${\mathcal{G}}^\delta $
introduced in Proposition \ref{prop:det1}. The following holds uniformly for
$z\in K\setminus \gamma _\alpha $, $N\gg 1$:

\par There exist positive constants $C_0$, $C_2$ such that
$$
{\bf P}\left(\ln |\det G_{-+}^\delta (z)|^2\ge -t\hbox{ and }\|
Q\|_{\mathrm{HS}}\le C_1\sqrt{N}\right)
\ge 1-e^{-N}-C_2\delta ^{-M}e^{-t/2},
$$
when
$$
t\ge C_0-2M \ln \delta ,\ \ 0<\delta \ll N^{-3/2}\alpha^2.
$$
\end{prop}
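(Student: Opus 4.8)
The plan is to estimate the probability that $\ln|\det G_{-+}^\delta(z)|^2$ is very negative, which by \eqref{det.22} controls how often $z$ fails to be well-separated from the spectrum of $P_N^\delta$. Since $G_{-+}^\delta$ is an $M\times M$ matrix (with $M$ fixed, independent of $N$) and $\det G_{-+}^\delta$ depends polynomially on the matrix entries of the perturbation, the strategy is to view $\det G_{-+}^\delta$ as essentially a holomorphic function of $Q$ and exploit that such functions cannot be too small on a large portion of the probability space. First I would recall from \eqref{lb.6}–\eqref{lb.7} that, on the event ${\mathcal{Q}}_{C_1N}$, one has $G_{-+}^\delta = G_{-+} - \delta G_-(Q + T(z,Q))G_+$ with $\|T\|_{\mathrm{HS}} = {\mathcal{O}}(\delta\alpha^{-2}N)$, so modulo a harmless correction the map $Q\mapsto G_{-+}^\delta$ is affine in $Q$ with ``slope'' $\delta G_-\cdot G_+$, whose singular values are bounded below by $\delta/C^2$ thanks to the uniform injectivity/surjectivity of $G_\pm$ from Proposition \ref{prop:gr1} and \eqref{lb.10}.

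The key steps, following \cite[Section 5.3]{SjVo19} with the modifications flagged in the two remarks, would be: (i) parametrize the perturbation so that $\det G_{-+}^\delta$ becomes a function on a ball of the complex vector space of relevant matrix entries, isolating the $M^2$ (or fewer) directions that actually move $G_{-+}$; (ii) show that along these directions $\det G_{-+}^\delta$ behaves like a monic polynomial of degree $M$ with leading coefficient of size $\gtrsim \delta^M$, using the lower bound on the singular values of $G_-(\cdot)G_+$; (iii) apply a quantitative lower bound for polynomials / holomorphic functions (a Remez- or Cartan-type estimate, or the ``$\ln|\det|$'' sublinear estimate from \cite{SjVo19}) to conclude that the set where $\ln|\det G_{-+}^\delta|^2 \le -t$ has Gaussian measure $\le C_2\delta^{-M}e^{-t/2}$; (iv) intersect with the event $\|Q\|_{\mathrm{HS}}\le C_1\sqrt N$, which costs at most $e^{-N}$ by \eqref{lb.3}, and check that the threshold $t\ge C_0 - 2M\ln\delta$ is exactly what is needed for the polynomial estimate to kick in (the $-2M\ln\delta$ accounting for the size of the leading coefficient). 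The role of the weakened assumption \eqref{lb.11} $\delta\ll N^{-3/2}\alpha^2$, versus $N^{-3}\alpha^2$ in \cite{SjVo19}, is precisely that working on $\|Q\|_{\mathrm{HS}}\le\sqrt{C_1 N}$ rather than $\le C_1 N$ shrinks the relevant ball and improves the error control in the Neumann expansion \eqref{lb.7}–\eqref{lb.8}.

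The main obstacle I expect is step (iii): getting the clean probabilistic lower bound $1 - e^{-N} - C_2\delta^{-M}e^{-t/2}$ requires carefully transporting a deterministic ``a holomorphic function of several variables with controlled leading behavior is not small except on a small set'' statement into a statement about complex Gaussian measure, and tracking that the implied constants $C_0, C_2$ depend only on $K$ (through $M$, $\alpha$ and the uniform bounds of Propositions \ref{prop:gr1} and \ref{prop:det1}) and not on $N$ or $z\in K\setminus\gamma_\alpha$. One has to be careful that the ``bad'' directions and the leading coefficient estimate are uniform in $z$, and that the corrections coming from $T(z,Q)$ and from replacing ${\mathcal{Q}}_{C_1N}$ by all of $\mathbf{C}^{N\times N}$ do not destroy the polynomial structure; this is where the assumption \eqref{lb.11} and the Remark preceding it are used. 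Since all of this is carried out in detail in \cite[Section 5.3]{SjVo19} for the band case and the present Grushin setup is identical in structure (only $R_{+-}\neq 0$ now, already handled by Proposition \ref{prop:CGP}), I would present the argument as an adaptation, indicating the three or four places where the estimates differ and verifying that \eqref{lb.11} suffices.
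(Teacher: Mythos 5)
Your proposal follows the same route as the paper: the paper gives no self-contained argument here, but, exactly as you do, reduces matters to the map \eqref{lb.6}--\eqref{lb.8}, the uniform singular-value bounds \eqref{lb.10} for $G_\pm$, and then invokes the reasoning of \cite[Section 5.3]{SjVo19}, with the strengthened hypothesis \eqref{lb.11} made possible by restricting to the event $\|Q\|_{\mathrm{HS}}\le\sqrt{C_1N}$ instead of $\|Q\|_{\mathrm{HS}}\le C_1N$. Your sketch of the internal mechanism of that cited proof (affine dependence on $Q$ up to the controlled correction $T(z,Q)$, a degree-$M$ determinant with leading behavior of size $\delta^{M}$, and an anti-concentration bound yielding $C_2\delta^{-M}e^{-t/2}$) is consistent with the form of the stated estimate, so this is essentially the paper's proof.
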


\section{Counting eigenvalues in smooth domains}\label{ce}
\setcounter{equation}{0}

\par 
We work under the assumptions of Proposition \ref{lb1} and 
from now on we assume that $\delta$ satisfies \eqref{m0}, i.e. 
\begin{equation}\label{ce.0}
	\e^{- N^{\delta_0} } \leq \delta \ll N^{-\delta_1},
\end{equation}
for some fixed $\delta_0 \in ]0,1[$ and $\delta_1 >3/2$. 
Notice that \eqref{lb.11} holds for $N>1$ sufficiently large (depending on $\alpha$). 
Then with 
probability $\ge 1-e^{-N}$ we have $G_{-+}^\delta (z)={\mathcal{O}}(1)$
for every $z\in K\setminus \gamma _\alpha $, hence by (\ref{det.22})
\begin{equation}\label{ce.1}
\ln |\det (P_N^\delta -z)|\le \ln |\det (p_N(\tau )-z)|+{\mathcal{O}}(1).
\end{equation}
On the other hand, by (\ref{det.22}) and Proposition \ref{lb1}, we
have for every $z\in K\setminus \gamma _\alpha $ that
\begin{equation}\label{ce.2}
\ln |\det (P_N^\delta -z)|\ge \ln |\det (p_N(\tau
)-z)|-\frac{t}{2}-{\mathcal{O}}(1)
\end{equation}
with probability
\begin{equation}\label{ce.3}
\ge 1-e^{-N}-C_2\delta ^{-M}e^{-t/2},
\end{equation}
when
\begin{equation}\label{ce.4}
  t\ge C_0-2M\ln \delta .
\end{equation}

\par Define $\phi (z)=\phi _N(z)$ by requiring that
\begin{equation}\label{ce.5}
N\phi (z)=\ln |\det (p_N(\tau )-z)| \hbox{ on } K\setminus \gamma _\alpha ,
\end{equation}
and
\begin{equation}\label{ce.6}
\phi (z)\hbox{ is continuous in }K\hbox{ and harmonic in
}\stackrel{\circ }{\gamma} _\alpha 
\end{equation}
Here it is understood that we may enlarge $\gamma _\alpha $ away from
a neighborhood of
the most interesting region $\partial \Omega \cap \gamma $ and achieve
that $\partial \gamma _\alpha $ be smooth everywhere.
\par 
Choose
\begin{equation}\label{ce.7}
t=N^{\epsilon _0},
\end{equation}
for some fixed $\epsilon _0\in ]0,1[$ with $\delta_0< \varepsilon_0$, 
see \eqref{ce.0}, \eqref{m0}. Then
$$
C_2\delta ^{-M}e^{-t/2}=\exp \left( \ln C_2 -M\ln \delta
  -N^{\epsilon _0}/2\right),
$$
and we require from $\delta $ that 
$$
\ln C_2 -M\ln \delta -N^{\epsilon _0}/2\le -N^{\epsilon _0}/4,
$$
i.e.
$$
\ln \delta \ge \frac{\ln C_2 }{M} -\frac{N^{\epsilon _0}}{4M}.
$$
This is fulfilled if $N\gg 1$ and
$$
\ln \delta \ge -\frac{N^{\epsilon _0}}{5M},
$$
i.e.
\begin{equation}\label{ce.8}
\delta \ge \exp \left(-\frac{1}{5M} N^{\epsilon _0} \right)
\end{equation}
and (\ref{ce.7}), (\ref{ce.8}) imply (\ref{ce.4}) when $N\gg 1$. Notice that \eqref{ce.0} implies \eqref{ce.8} 
for $N\gg 1$. 

Combining (\ref{ce.2}), (\ref{ce.5}), (\ref{ce.7}) and (\ref{ce.8}),
we get for each $z\in K\setminus \gamma _\alpha $ that
\begin{equation}\label{ce.9}
\ln |\det (P_N^\delta -z)|\ge N(\phi (z)-\epsilon _1),
\end{equation}
with probability
\begin{equation}\label{ce.10}
\ge 1-e^{-N}-e^{-N^{\epsilon _0}/4} 
\end{equation}
where
\begin{equation}\label{ce.11}
\epsilon _1=N^{\epsilon _0-1}.
\end{equation}
Here and in the following, we assume that $N\ge N(\alpha ,K)$
sufficiently large.

\par On the other hand, with probability $\ge 1-e^{-N}$, we have
\begin{equation}\label{ce.12}
\ln |\det (P_N^\delta -z)|\le N(\phi (z)+\epsilon _1)
\end{equation}
for all $z\in K\setminus \gamma _\alpha $. We assume that $K$ is large
enough to contain a neighborhood of $\gamma _\alpha $. Then, since the
left hand side in (\ref{ce.12}) is subharmonic and the right hand side
is harmonic in $\gamma _\alpha $, we see that (\ref{ce.12}) remains
valid also in $\gamma _\alpha $ and hence in all of $K$.

\par Let $\Omega \Subset {\bf C}$ be as in Theorem \ref{main}, so that
$\partial \Omega $ intersects $\gamma $ at finitely many points
$\widetilde{z}_1,...,\widetilde{z}_{k_0}$ which are not critical
values of $p$ and where the intersection is transversal. Choose
$z_1,...,z_L\in \partial \Omega \setminus \gamma _\alpha $ such that
with $r_0=C_0\alpha $, $C_0\gg 1$, we have
\begin{equation}\label{ce.13}
\frac{r_0}{4}\le |z_{j+1}-z_j|\le \frac{r_0}{2}
\end{equation}
where the $z_j$ are distributed along the boundary in the positively
oriented sense and with the cyclic convention that $z_{L+1}=z_{1}$. 
Notice that $L={\mathcal{O}}(1/\alpha )$. Then
$$\partial \Omega \subset \bigcup_{1}^L D(z_j,r_0/2)$$ 
and we can arrange
so that $z_j\not\in \gamma _\alpha $ and even so that
\begin{equation}\label{ce.14}
\mathrm{dist\,}(z_j,\gamma _\alpha )\ge \alpha .
\end{equation}

\par Choose $K$ above so that $\overline{\Omega }\Subset K$. Combining 
\eqref{ce.12} and \eqref{ce.9} we have that $\det (P_N^\delta -z)$ satisfies the upper bound \eqref{ce.12} 
for all $z\in K$ and the lower bound \eqref{ce.9} for $z = z_1,\dots, z_L$ with probability 
\begin{equation}\label{ce.15.5}
\ge 1-\mO(\alpha^{-1})(e^{-N}+e^{-N^{\epsilon _0}/4}).
\end{equation}
Since
$\phi $ is continuous and subharmonic, we can apply the zero counting
theorem of \cite{Sj09b} (see also \cite[Chapter 12]{Sj19}) to the
holomorphic function $\det (P_N^\delta -z)$ and get
\begin{multline}\label{ce.15}
  \left|
\# (\sigma (P_N^\delta )\cap \Omega )-\frac{N}{2\pi }\int_{\Omega
}\Delta \phi L(dz)   \right|\le {\mathcal{O}}(N)\times \\
\left( L\epsilon _1 +\int_{\partial \Omega +D(0,r_0 )}\Delta \phi
  L(dz)+\sum_1^L \int_{D(z_j,r_0)} \Delta \phi (z) \left| \ln \frac{|z-z_j|}{r_0} \right|L(dz) \right)
\end{multline}
with probability \eqref{ce.15.5}.
\\
\par
Recall that $L={\mathcal{O}}(1/\alpha )$ (hence ${\mathcal{O}}(1)$ for every fixed $\alpha $). $\Delta \phi
$ is supported in $\gamma _\alpha $ and the number of discs
$D(z_j,r_0)$ that intersect $\gamma _\alpha $ is $\le {\mathcal{O}}(1)$
uniformly with respect to $\alpha $. Also $\ln (|z-z_j|/r_0)={\mathcal{O}
  }(1)$ on the intersection of each such disc with $\gamma _\alpha
$. Since $\epsilon _1=N^{\epsilon _0-1}$, we get from
(\ref{ce.15}):

\begin{multline}\label{ce.16}
\left| \# (\sigma (P_N^\delta )\cap \Omega )-\frac{N}{2\pi }\int_{\Omega
  }\Delta \phi L(dz)  \right|\\
\le {\mathcal{O}}(N)\left( {\mathcal{O}}_\alpha (N^{\epsilon _0-1})+\int_{(\gamma \cap \partial
    \Omega )+D(0,2r_0)} \Delta \phi (z) L(dz) \right).
\end{multline}

\par We next study the measure $\Delta \phi $. Away from $\gamma \cap
\partial \Omega $ we may enlarge $\gamma _\alpha $ to become a nice
domain with smooth boundary everywhere. Notice that
\begin{equation}\label{ce.17}
  \ln |\det (p_N(\tau )-z)|=\sum_1^{N+M}\ln |z-\lambda _j|,
\end{equation}
where
$$
\lambda _j=p\left(\exp \frac{2\pi ij}{N+M} \right),\ 1\le j\le N+M,
$$
and this expression is equal to $N\phi (z)$ in $K\setminus \gamma
_\alpha $.

\par Define
\begin{equation}\label{ce.18}
\psi (z)=\phi (z)-\frac{1}{N}\sum_1^{N+M}\ln |z-\lambda _j|,
\end{equation}
so that $\psi $ is continuous away from the $\lambda _j\in \gamma $,
\begin{equation}\label{ce.19}
\psi (z)=0\hbox{ in }{\bf C}\setminus \gamma _\alpha ,
\end{equation}
\begin{equation}\label{ce.20}
{{\psi }_\vert}_{\partial \gamma _\alpha }=0,
\end{equation}
\begin{equation}\label{ce.21}
\Delta \psi =-\frac{2\pi }{N}\sum_1^{N+M}\delta _{\lambda _j}\hbox{ in
}\stackrel{\circ }{\gamma }_\alpha .
\end{equation}
It follows that in $\gamma _\alpha $:
\begin{equation}\label{ce.22}
\psi (z)=-\frac{2\pi }{N}\sum_1^{N+M} G_{\gamma _\alpha }(z,\lambda _j),
\end{equation}
where $G_{\gamma _\alpha }$ is the Green kernel for $\gamma _\alpha $.

\par $\phi $ is harmonic away from $\partial \gamma _\alpha $, so for
$\phi $ as a distribution on ${\bf C}$, we have $\mathrm{supp\,}\Delta
\phi \subset \partial \gamma _\alpha $. Now $\psi -\phi $ is harmonic
near $\partial \gamma _\alpha $, so $\Delta \psi =\Delta \phi $ near
$\partial \gamma _\alpha $. In the interior of $\gamma _\alpha $ we
have (\ref{ce.21}) and in order to compute $\Delta \psi$ globally, we
let $v\in C_0^\infty ({\bf C})$ and Green's formula to get
\begin{multline*}
\langle \Delta \psi ,v\rangle =\langle \psi ,\Delta v\rangle
=\int_{\gamma _\alpha }\psi \Delta v L(dz)\\
=\int_{\gamma _\alpha }\Delta \psi v L(dz)+\int_{\partial \gamma
  _\alpha }\psi \partial _{\nu }v |dz|-\int_{\partial \gamma _\alpha
}\partial _\nu \psi v |dz|.
\end{multline*}
Here $\nu $ is the exterior unit normal and in the last term it is
understood that we apply $\partial _\nu $ to the restriction of $\psi
$ to $\stackrel{\circ }{\gamma }_\alpha $ then take the boundary
limit. (\ref{ce.20}), (\ref{ce.21}), (\ref{ce.22}) imply that in the
sense of distributions on ${\bf C}$,
\begin{equation}\label{ce.23}
\Delta \psi =- \frac{2\pi}{N}\sum_1^{N+M}\delta _{\lambda _j}
+\frac{2\pi }{N}\partial _\nu \left(\sum_1^{N+M}G_{\gamma _\alpha
  }(\cdot ,\lambda _j) \right)L_{\partial \gamma _\alpha }(dz)
\end{equation}
where $L_{\partial \gamma _\alpha }$ denotes the (Lebesgue) arc length
measure supported on $\partial \gamma _\alpha $. 
\par 
By the preceding discussion we conclude that
\begin{equation}\label{ce.24}
\Delta \phi =\frac{2\pi }{N} \left(\sum_1^{N+M}\partial _\nu G_{\gamma _\alpha
  }(\cdot ,\lambda _j) L_{\partial \gamma _\alpha }(dz) \right).
\end{equation}
Each term in the sum is a non-negative measure of mass 1:
\begin{equation}\label{ce.25}
\int \partial _\nu G(z ,\lambda _j)L_{\partial \gamma _\alpha }(dz)=1.
\end{equation}
%
\begin{lemma}
We have 
\begin{equation}\label{ce.26}
\left| \partial _\nu G_{\gamma _\alpha }(z ,\lambda _j) \right|\le
\frac{1}{\alpha }e^{-\frac{|z-\lambda _j|}{C\alpha }},
\end{equation}
for $z\in \partial \gamma _\alpha \cap \mathrm{neigh\,}(\gamma  \cap \partial\Omega
)$, $\lambda _j\in \gamma _\alpha $, $|z-\lambda _j|\ge \alpha /{\mathcal{O}
  }(1)$. (\ref{ce.26}) also holds when $z\in \partial \gamma _\alpha
,$ $\lambda _j\in \gamma _\alpha $, $|z-\lambda _j|\ge \alpha /{\mathcal{O}
  }(1)$ and $(z,\lambda _j)\in (\Omega \times ({\bf C}\setminus
\Omega ))\cup (({\bf C}\setminus \Omega )\times \Omega ) $. 
\end{lemma}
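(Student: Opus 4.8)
The plan is to estimate the normal derivative of the Green's function $G_{\gamma_\alpha}(z,\lambda_j)$ for points $z$ on the boundary $\partial\gamma_\alpha$ that are well separated from the pole $\lambda_j\in\gamma_\alpha$, exploiting that $\gamma_\alpha$ is a tubular neighborhood of width $\sim\alpha$ of the curve $\gamma=p(S^1)$, so that near the interesting region $\gamma\cap\partial\Omega$ the domain $\gamma_\alpha$ looks, after rescaling by $1/\alpha$, like an infinite strip (or half-plane) of unit width. First I would set up the standard comparison: since $G_{\gamma_\alpha}(\cdot,\lambda_j)$ is harmonic away from $\lambda_j$, vanishes on $\partial\gamma_\alpha$, and is positive inside, the Hopf lemma gives $\partial_\nu G\le 0$ with the sign already accounted for in \eqref{ce.24}–\eqref{ce.25}; the content is the exponential decay in $|z-\lambda_j|/\alpha$. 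I would obtain it by dominating $G_{\gamma_\alpha}$ by the Green's function of a larger model domain containing $\gamma_\alpha$ — an infinite strip of width $O(\alpha)$ (or a half-plane in the degenerate directions) — whose Green's function and its boundary normal derivative (the Poisson kernel) are explicit and decay like $e^{-\pi|\mathrm{Re}(z-\lambda_j)|/(\text{width})}$ along the strip. Monotonicity of Green's functions under domain inclusion, together with comparison of the normal derivatives at a common boundary point via Hopf/boundary Harnack, then yields \eqref{ce.26}.

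Concretely, the key steps in order would be: (i) fix a point $z\in\partial\gamma_\alpha$ near $\gamma\cap\partial\Omega$ and use the transversality hypotheses (1)–(4) on $\Omega$ and the non-self-intersection and non-criticality of $p$ at the intersection points to guarantee that, in a fixed-size neighborhood of that region, $\gamma_\alpha$ is a genuine $C^\infty$ strip-like domain of width comparable to $\alpha$ with uniformly bounded geometry after dilation by $1/\alpha$; (ii) rescale $w=(z-z_0)/\alpha$ so that the rescaled domain $\widetilde\gamma$ is, uniformly in $\alpha$, a smooth perturbation of a standard strip $\{|\mathrm{Im}\,w|<1\}$ (or a half-plane), on a large but fixed ball; (iii) on this model, invoke the known exponential decay of the Poisson kernel / normal derivative of the Green's function of a strip, namely $|\partial_\nu G_{\text{strip}}(w,w')|\le C e^{-|w-w'|/C}$; (iv) transfer the bound back to $\gamma_\alpha$ using the domain monotonicity $G_{\gamma_\alpha}\le G_{\text{model}}$ (valid because $\gamma_\alpha\subset$ model domain after extension) plus a Harnack/boundary-Harnack argument to compare the boundary normal derivatives, producing the factor $\tfrac1\alpha$ from the rescaling $\partial_\nu = \alpha^{-1}\partial_{\nu_w}$ and the constant from the bounded geometry; (v) finally, handle the second assertion — where $z\in\partial\gamma_\alpha$ is arbitrary (not necessarily near $\gamma\cap\partial\Omega$) but $z,\lambda_j$ lie on opposite sides of $\partial\Omega$ — by noting that in that case the segment from $z$ to $\lambda_j$ must cross $\partial\Omega$, and since $\gamma_\alpha$ away from $\gamma\cap\partial\Omega$ can be taken to have smooth boundary and to stay within an $O(\alpha)$ tube, the same strip comparison applies along the portion of $\gamma_\alpha$ joining the two regions, so the distance $|z-\lambda_j|$ is again measured in units of the tube width $\alpha$.

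I expect the main obstacle to be step (iv): making the comparison of \emph{normal derivatives} (as opposed to the functions themselves) quantitative and uniform in $\alpha$. Domain monotonicity gives $0\le G_{\gamma_\alpha}(z',\lambda_j)\le G_{\text{model}}(z',\lambda_j)$ for interior $z'$, and both vanish on the shared part of the boundary, but to pass to $|\partial_\nu G_{\gamma_\alpha}|\le |\partial_\nu G_{\text{model}}|$ at a boundary point one needs the two boundaries to agree to second order there, or else a boundary Harnack principle on the rescaled domain of bounded geometry to control the ratio $G_{\gamma_\alpha}/G_{\text{model}}$ up to the boundary by a constant depending only on that geometry. The hypotheses on $\Omega$ and $p$ are exactly what is needed to ensure that geometry is indeed uniformly controlled after rescaling (no cusps, no degenerating widths, bounded curvature in the $1/\alpha$ scale), so the argument closes; but carefully stating the model domain in each of the finitely many local charts near the $\widetilde z_i$ and checking the uniformity is the delicate bookkeeping. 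The exponential decay itself, once the strip model is in place, is classical and I would simply cite it or reproduce the one-line conformal-map computation $\{|\mathrm{Im}\,w|<1\}\to$ disc.
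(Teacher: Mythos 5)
There is a genuine gap, and it sits exactly where you predicted trouble, but the failure is more basic than the normal-derivative comparison: your central mechanism, domain monotonicity $G_{\gamma_\alpha}\le G_{\mathrm{model}}$ with an explicit strip (or half-plane) model of width $\mathcal{O}(\alpha)$, is not available. The domain $\gamma_\alpha$ is not contained in any $\mathcal{O}(\alpha)$-thin model: away from $\gamma\cap\partial\Omega$ it is deliberately enlarged into two unit-size regions $\Gamma_1\subset\Omega$ and $\Gamma_2\subset\mathbf{C}\setminus\Omega$ (so that $\partial\gamma_\alpha$ is smooth), and the thin part is a closed tube following $\gamma$, not a straight strip. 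Any superset of $\gamma_\alpha$ must contain these fat regions, and its Green's function no longer has the explicit $e^{-c\,|w-w'|}$ strip decay at scale $\alpha$ — proving decay for such a ``dumbbell'' superset is essentially the original problem. Monotonicity in the useful direction (comparing with the Green's function of a single thin band $\gamma_{k,\alpha}\subset\gamma_\alpha$) goes the wrong way: it gives a lower bound on $G_{\gamma_\alpha}$, not the needed upper bound, because the restriction of $G_{\gamma_\alpha}(\cdot,\lambda_j)$ to a band does not vanish on the ends of the band, where it is merely $\mathcal{O}(1)$. The missing idea is precisely a propagation-of-smallness argument along the bands: in the paper one cuts off, $u=\chi\, G_{\gamma_\alpha}(\cdot,\lambda_j)|_{\gamma_{k,\alpha}}$, notes that $\Delta u$ is $\mathcal{O}(1)$ in $L^1$ and supported near the ends (plus $\delta_{\lambda_j}$ when the pole is in the band), and represents $u$ through the Green kernel of the band, whose exponential decay at rate $1/\mathcal{O}(\alpha)$ is supplied by Proposition 2.2 of \cite{Sj09b}; an $\mathcal{O}(1)$ a priori bound on $G_{\gamma_\alpha}$ at unit distance from the pole (same proposition) feeds the ends, and the maximum principle then spreads the resulting $\mathcal{O}(e^{-1/\mathcal{O}(\alpha)})$ bound over $(\mathbf{C}\setminus\Omega)\cap\gamma_\alpha$ (resp.\ $\Omega\cap\gamma_\alpha$). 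Your local charts in fixed rescaled balls cannot by themselves produce decay over distances of order $|z-\lambda_j|/\alpha\gg 1$ without such an iteration or representation, and your scheme says nothing about the main case where $\lambda_j$ lies in $\Gamma_1$ or $\Gamma_2$, where there is no strip structure around the pole at all.

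Two smaller points. First, the obstacle you single out — comparing normal derivatives via boundary Harnack — is sidestepped in the paper: one reduces \eqref{ce.26} by scaling to a pointwise bound $|G_{\gamma_\alpha}(z,\lambda_j)|\le e^{-|z-\lambda_j|/C\alpha}$ for interior $z$, and recovers the factor $1/\alpha$ for the (normal) derivative from interior-type gradient estimates for the harmonic function vanishing on $\partial\gamma_\alpha$ (estimate \eqref{3} from \eqref{2}); no comparison of Poisson kernels is needed. Second, for the last assertion of the lemma your observation that the segment from $z$ to $\lambda_j$ crosses $\partial\Omega$ is not the relevant mechanism; what matters is that any passage inside $\gamma_\alpha$ from the $\Omega$-side to the other side must traverse a thin band crossing $\partial\Omega$, where the bound $\mathcal{O}(e^{-1/\mathcal{O}(\alpha)})$ has been established, and the maximum principle in the fat components then gives \eqref{ce.26} there because $|z-\lambda_j|=\mathcal{O}(1)$. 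As written, your step (v) gestures at this but again relies on the invalid strip comparison, so the argument does not close.
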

\begin{proof} 
By scaling of the harmonic function $G_{\gamma_\alpha}(\cdot,\lambda_j)$ by a 
factor $1/\alpha$, it suffices to show that 
\begin{equation}\label{ce.26.5}
\left|  G_{\gamma _\alpha }(z ,\lambda _j) \right|\le
e^{-\frac{|z-\lambda _j|}{C\alpha }},
\end{equation}
for $(z,\lambda_j)$ as after \eqref{ce.26} with the difference that $z$ now varies in $\gamma_{\alpha}$ 
instead of $\partial\gamma_{\alpha}$.
\par
We decompose $\gamma
_\alpha $ as $\Gamma _1\cup \Gamma _2\cup \gamma _{1,\alpha }\cup
...\cup \gamma _{T,\alpha }$, where $\Gamma _1$ and $\Gamma _2$ are
the enlarged parts of $\gamma _\alpha $ with $\Gamma _1\subset \Omega
$, $\Gamma _2\subset {\bf C}\setminus \Omega $ and $\gamma _{1,\alpha
},...,\gamma _{T,\alpha }$ are the regular parts of width $2\alpha $,
corresponding to the segments of $\gamma $, that intersect $\partial
\Omega $ transversally. 

\par For simplicity, we assume that $\Gamma _1$ and $\Gamma _2$ are
connected and that each segment $\gamma _{k,\alpha }$ links
$\Gamma _1$ to $\Gamma _2$ and crosses $\partial \Omega $ once. We may
think of $\gamma _{\alpha }$ as a graph with the vertices $\Gamma _1$,
$\Gamma _2$ and with $\gamma _{k,\alpha }$ as the edges.

\par Let first $\lambda _j$ belong to $\Gamma _1$. We apply the first
estimate in Proposition 2.2 in \cite{Sj09b} or equivalently Proposition
12.2.2 in \cite{Sj19} and see that
$-G_{\gamma _\alpha }(z,\lambda _j)\le {\mO}(1)$ for
$z\in \gamma _\alpha $, $|z-\lambda _j|\ge 1/{\mO}(1)$.
Possibly, after cutting away a piece of $\gamma _{k,\alpha }$ and
adding it to $\Gamma _1$, we may assume that $-G_{\gamma _\alpha
}(z,\lambda _j)\le {\mO}(1)$ in $\gamma _{k,\alpha }$.
Consider
one of the $\gamma _{k,\alpha }$ as a
finite band with the two ends given by the closure of the set of
$z\in \partial \gamma _{k,\alpha }$ with
$\mathrm{dist\,}(z,\partial \gamma _\alpha )<\alpha $. Let
$G_{\gamma _{k,\alpha }}$ denote the Green kernel of
$\gamma _{k,\alpha }$.  Then the second estimate in the quoted
proposition applies and we find
$$
-G_{\gamma _{k,\alpha }}(x,y)\le {\mO}(1)e^{-|x-y|/{\mO}(\alpha
  )},\hbox{ when } x,y\in \gamma _{k,\alpha },\ |x-y|\ge \alpha /{\mO}(1).
$$
Let
$$
u=\chi {{G_{\gamma _\alpha }(\cdot ,\lambda _j)}_\vert}_{\gamma
  _{k,\alpha }},
$$
where $\chi \in C^\infty (\gamma _{k,\alpha };[0,1])$ vanishes near
the ends of $\gamma _{k,\alpha }$, is equal to 1 away from an
$\alpha $-neighborhood of these end points and with the property that
$\nabla \chi ={\mO}(1/\alpha )$,
$\nabla ^2\chi ={\mO}(1/\alpha ^2)$. Then
${{u}_\vert}_{\partial \gamma _{k,\alpha }}=0$ and
$\Delta u={\mO}(\alpha ^{-2})$ is supported in an
$\alpha $-neighborhood of the union of the two ends and hence of
uniformly bounded $L^1$-norm. Now we apply the second estimate in the
quoted proposition to
$ u=\int G_{\gamma _{k,\alpha }}(\cdot ,y)\Delta u(y)L(dy) $ and we see that
\begin{equation}\label{1}
G_{\gamma _\alpha }(\cdot ,\lambda _j)={\mO}(e^{-1/{\mO}(\alpha )}).
\end{equation}
in $\{ x\in \gamma _{k,\alpha };\,\mathrm{dist\,}(x,\partial \Omega
\cap \gamma _{k,\alpha })\le 1/{\mO}(1) \}$.
Varying $k$, we get (\ref{1}) in $\{ x\in \gamma _\alpha ;\,
\mathrm{dist\,}(x,\partial \Omega \cap \gamma )\le 1/{\mO}(1)
\}$. Applying the maximum principle to the harmonic
function ${{G_{\gamma _\alpha }(\cdot ,\lambda _j)}_\vert}_{({\bf
    C}\setminus \Omega )\cap \gamma _\alpha }$, we see that (\ref{1})
holds uniformly in $({\bf C}\setminus \Omega )\cap \gamma _\alpha $.

\par Similarly, we have (\ref{1}) uniformly in
$$\{x\in \gamma _\alpha ;\, \mathrm{dist\,}(x,\partial \Omega \cap
\gamma )\le 1/{\mO}(1) \}\cup (\Omega \cap \gamma _\alpha ),$$ when
$\lambda _j\in \Gamma _2$ and we have shown \eqref{ce.26.5}, \eqref{ce.26} when
$\lambda _j\in \Gamma _1\cup \Gamma _2$. Similarly, we have
\eqref{ce.26} when $\lambda _j\in \gamma _{k,\alpha }$ is close to one
of the ends.

It remains to treat the case when $\lambda _j\in \gamma _{k,\alpha }$
is at distance $\ge 1/{\mO}(1)$ from the ends of $\gamma _{k,\alpha
}$. Defining $u=\chi {{G_{\gamma _\alpha }(\cdot ,\lambda
    _j)}_\vert}_{\gamma _{k,\alpha }}$ as before we now have
$$
\Delta u=[\Delta ,\chi ]G_{\gamma _\alpha (\cdot ,\lambda _j)}+\delta
_{\lambda _j},
$$
where the first term in the right hand side has its support in an
$\alpha $-neighborhood of the union of the ends and is ${\mO}(1)$
in $L^1$. By the second part of the quoted proposition we have
\begin{equation}\label{2}
u(x)={\mO}(1)\exp \left( -\frac{1}{{\mO}(\alpha )}\min
  \left(\mathrm{dist\,} (x,\mathrm{ends\,}(\gamma _{k,\alpha
    })),|x-\lambda _j| \right) \right),
\end{equation}
away from an $\alpha $-neighborhood of $\mathrm{ends\,}(\gamma
_{k,\alpha })\cup \{\lambda _j\}$. Here $\mathrm{ends\,}(\gamma
_{k,\alpha })$ denotes the union of the two ends of $\gamma _{k,\alpha
}$. Since $u$ is harmonic away from $\lambda _j$ and from $\alpha
$-neighborhoods of the ends, we get from (\ref{2}) that
\begin{equation}\label{3}
\nabla u(x)={\mO}\left(\frac{1}{\alpha } \right)\exp \left( -\frac{1}{{\mO}(\alpha )}\min
  \left(\mathrm{dist\,} (x,\mathrm{ends\,}(\gamma _{k,\alpha
    })),|x-\lambda _j| \right) \right),
\end{equation}
which gives \eqref{ce.26} near $\partial \Omega \cap \gamma $.
By using the maximum principle as before, we can extend the validity
of \eqref{ce.26} to all of $\partial \gamma _\alpha \setminus
D(\lambda _j,\alpha /{\mO}(1))$.
\end{proof}

\par
Continuing, notice that by \eqref{gr.3}, \eqref{ce.17} 
\begin{equation}\label{n1}
	\#\{ \sigma(P_{S_{\widetilde{N}}}) \cap \eta \} = 
	\#\{\widehat{S}_{\widetilde{N}} \cap p_N^{-1}(\eta) \}, \quad \widetilde{N} = N + M, 
\end{equation}
for $\eta \subset \gamma$. 
Since two consecutive points of $\widehat{S}_{\widetilde{N}}$ differ by an angle 
of $2\pi/\widetilde{N}$ and by the assumptions (1)-(4) prior to Theorem \ref{main}, we get that
$$
\# \{ \lambda _j;\, \mathrm{dist\,}(\lambda _j,\partial \Omega \cap
\gamma )<4r_0 \} =\mO(\alpha N)
$$
and also 
$$\# \{ \lambda _j;\, \mathrm{dist\,}(\lambda _j,\partial \Omega \cap
\gamma )\in [2^kr_0,2^{k+1}r_0[ \} =\mO(\alpha 2^k N),\ k=2,3,...
$$
From \eqref{ce.26} we get
\begin{equation}\label{ce.27}
\begin{split}
\frac{N}{2\pi }\int_{(\partial \Omega \cap \gamma )+D(0,2r_0)}\Delta \phi
L(dz)
&=\sum_j \int_{((\partial \Omega \cap \gamma )+D(0,2r_0))\cap
\partial \gamma _\alpha }\partial _\nu G_{\gamma _\alpha }(z,\lambda
_j) L(dz)\\
&=\mO(\alpha N)+\sum_{k=2}^\infty  \sum_{\lambda _j;\atop
  \mathrm{dist\,}(\lambda _j,\partial \Omega \cap \gamma )\in
  [2^kr_0,2^{k+1}r_0[}e^{-2^k/\mO(1)}\\
&=\mO(1)\left
  (\alpha N+\sum_{k=2}^\infty e^{-2^k/\mO(1)} \alpha 2^kN\right)\\ 
 &=\mO(\alpha N)+\mO(1)N\alpha \int_0^\infty e^{-t/{\mO}(1)}dt \\
&=\mO(\alpha N).
\end{split}
\end{equation}
Combining \eqref{ce.25} and \eqref{ce.26}, we get when $\mathrm{dist\,}(\lambda _j,\partial
\Omega \cap \gamma )\ge 2r_0$:
$$
\int_{\partial \gamma _\alpha \cap\Omega }\partial _\nu G_{\gamma
  _\alpha }(z,\lambda _j)L_{\partial \gamma _\alpha }(dz)
=\begin{cases}
1+\mO(1)e^{-\mathrm{dist\,}(\lambda _j,\partial \Omega \cap
  \gamma )/\mO(\alpha )},&\hbox{when }\lambda _j\in \Omega ,\\
\mO(1)e^{-\mathrm{dist\,}(\lambda _j,\partial \Omega \cap
  \gamma )/\mO(\alpha )},&\hbox{when }\lambda _j\not\in \Omega .
\end{cases}
$$
We now get
\begin{equation}\label{ce.28}
\begin{split}
    \frac{N}{2\pi }\int_{\Omega }\Delta \phi L(dz)=&\sum_{j;\,
      \mathrm{dist\,}(\lambda _j,\gamma \cap \partial \Omega )\le
      4r_0} \int_{\partial \gamma _\alpha \cap\Omega }\partial
    _\nu
    G_{\gamma _\alpha }(z,\lambda _j)L_{\partial \gamma _\alpha }(dz)\\
    &+\sum_{k=2}^\infty \sum_{\lambda _j\in \Omega , \atop
      \mathrm{dist\,}(\lambda _j,\gamma \cap \partial \Omega )\in
      [2^kr_0,2^{k+1}r_0[}\int_{\partial \gamma _\alpha \cap \Omega
    }\partial _\nu G_{\gamma _\alpha }(z,\lambda _j)L_{\partial \gamma
      _\alpha }(dz) \\
    &+\sum_{k=2}^\infty \sum_{\lambda _j\in {\bf C}\setminus \Omega ,
      \atop \mathrm{dist\,}(\lambda _j,\gamma \cap \partial \Omega
      )\in [2^kr_0,2^{k+1}r_0[}\int_{\partial \gamma _\alpha \cap
      \Omega }\partial _\nu G_{\gamma _\alpha }(z,\lambda
    _j)L_{\partial \gamma
      _\alpha }(dz)\\
    =& \mO(\alpha N)+ \sum_{k=2}^\infty \sum_{\lambda _j\in \Omega ,
      \atop \mathrm{dist\,}(\lambda _j,\gamma \cap \partial \Omega
      )\in
      [2^kr_0,2^{k+1}r_0[} (1+\mO(1)e^{-2^k/\mO(1)}) \\
    &+\sum_{k=2}^\infty \sum_{\lambda _j\in {\bf C}\setminus \Omega ,
      \atop \mathrm{dist\,}(\lambda _j,\gamma \cap \partial \Omega
      )\in
      [2^kr_0,2^{k+1}r_0[}\mO(1)e^{-2^k/\mO(1)}\\
    &=\#\{ \lambda _j\in \Omega \}+\mO(\alpha N).
  \end{split}
\end{equation}
Thus (\ref{ce.16}) gives 
\begin{equation}\label{ce.29}
\begin{split}
\# (\sigma (P_N^\delta )\cap \Omega )&=\# (\{\lambda _j \}\cap \Omega
)+{\mathcal{O}}(\alpha N)+{\mathcal{O}}_\alpha (N^{\epsilon _0})\\
&=\frac{N}{2\pi }\left( \int_{S^1\cap p^{-1}(\Omega )}L_{S^1}(d\theta) \right)
+{\mathcal{O}}(\alpha N )+{\mathcal{O}}_\alpha (N^{\epsilon _0})+o(N),
\end{split}
\end{equation}
with a probability as in (\ref{ce.15.5}) which is bounded from below by the probability 
\eqref{m2} for $N>1$ sufficiently large. Here and in the next formula we view $p_N$ and $p$ as maps from  
$S^1$ to $\mathbf{C}$. 
In the second equality we used that by \eqref{n1}
 \begin{equation}\label{n2}
 \begin{split}
	\# (\{\lambda _j \}\cap \Omega)
	&=\frac{\widetilde{N}}{2\pi} \int_{S^1\cap p_N^{-1}(\Omega)}L_{S^1}(d\theta) +\mO(1) \\ 
	&=\frac{N}{2\pi} \int_{S^1\cap p_N^{-1}(\Omega )}L_{S^1}(d\theta) +\mO(M) \\ 
	&=\frac{N}{2\pi} \int_{S^1\cap p^{-1}(\Omega )}L_{S^1}(d\theta) +o(N),
\end{split}
\end{equation}
where we used that $p_N \to p$ uniformly on $S^1$ and where the measure $L_{S^1}(d\theta)$ in 
the integral denotes the Lebesgue measure on $S^{1}$. 
\par
Theorem \ref{main} follows by taking $\alpha>0$ in \eqref{ce.29} arbitrarily small and $N>1$ 
sufficiently large.
\section{Convergence of the empirical measure}
In this section we present a proof of Theorem \ref{main2} following the strategy 
of \cite[Section 7.3]{SjVo19}. An alternative, and perhaps more direct way,  
to conclude the weak convergence of the empirical measure from a counting 
theorem as Theorem \ref{main2}, has been presented in \cite[Section 7.1]{SjVo19}. 
\par
Recall the definition of the empirical measure $\xi_N$ \eqref{emp1}. 
By \eqref{mark1}, \eqref{unp.5.1} combined with a Borel Cantelli argument, it follows that 
almost surely 
\begin{equation}\label{emc.1}
	\supp \xi_N \subset  \overline{D(0,\| p \|_{L^{\infty}(S^1)}+1)} \defeq K\subset 
	D(0,\| p \|_{L^{\infty}(S^1)}+2)\defeq K'
\end{equation}
for $N$ sufficiently large. For $p$ as in \eqref{unp.5}, put 
\begin{equation}\label{eq:em2.4}
	 \xi =p_*\left(\frac{1}{2\pi} L_{S^1}\right)
\end{equation}
which has compact support, 
\begin{equation}\label{eq:em2.5}
	\supp \xi = p(S^1) \subset K.
\end{equation}
Here, $\frac{1}{2\pi} L_{S^1}$ denotes the normalized Lebesgue measure on $S^1$. 
\\
\par
Using \cite[Theorem 7.1]{SjVo19}, it remains to show that for almost every $z\in K'$ we 
have that $U_{\xi_N}(z) \to U_{\xi}(z)$ almost surely, where 
\begin{equation*}
	U_{\xi_N}(z) = - \int \log | z- x | \xi_N(dx), \quad U_{\xi}(z) = - \int \log | z- x | \xi(dx).
\end{equation*}
\par
The cited Theorem is a modification 
of a classical result which allows to deduce the weak convergence of measures from the point-wise 
convergence of the associated Logarithmic potentials, see for instance \cite[Theorem 2.8.3]{Ta12} or 
\cite{BoCh13}.
\\
\par
For $z\notin \sigma( P_N^{\delta})$ 
\begin{equation}\label{emc.3}
	U_{\xi_N}(z) = -\frac{1}{N} \log| \det (P_N^{\delta} -z)|.
\end{equation}
For any $z\in \C$ the set $\Sigma_z = \{ Q \in \C^{N\times N}; \det(P_N +\delta Q -z) =0\}$ 
has Lebesgue measure $0$, since $\C^{N\times N} \ni Q \mapsto \det (P_N^{\delta} -z)$ is 
analytic and not constantly $0$. Thus $\mu_N(\Sigma_z)=0$, where $\mu_N$ is the Gaussian 
measure given in after \eqref{unp7.3}, and for every $z\in \C$ \eqref{emc.3} holds 
almost surely. 
\par
Let $\delta$ satisfy \eqref{m0} for some fixed $\delta_0\in ]0,1[$ and $\delta_1>3/2$. Pick 
a $\varepsilon_0 \in ]\delta_0,1[$. Let $z\in K'\backslash p(S^1)$. Recall \eqref{det.2}. 
For $\alpha>0$ sufficiently small, we have that $z\in K'\backslash \gamma_{\alpha}$. 
By taking $N>1$ sufficiently large, we have that $p(S^1) \subset \gamma_{\alpha/2}$.  

Then, by \eqref{ce.9} and \eqref{ce.12}, 
\begin{equation}\label{emc.4}
	\left| 
	\frac{1}{N} \log| \det (P_N^{\delta} -z)| - \phi(z)
	\right|
	\leq \mO(N^{\varepsilon_0 -1}).
\end{equation}
with probability $\geq 1 - \e^{-N}- \e^{-N^{\varepsilon_0/4}}$. Here, 
$\phi(z) = N^{-1}\ln |\det (p_N(\tau )-z)|$ by \eqref{ce.5}, since $z\notin \gamma_\alpha$.
\par
Using a Riemann sum argument and the fact that $p_N \to p$ uniformly on $S^1$, we have that 
\begin{equation}\label{emc.5}
	\left| 
	\phi(z) + U_{\xi}(z) 
	\right| \longrightarrow 0, \quad \hbox{as } N\to \infty.
\end{equation}
Thus, by \eqref{emc.4}, \eqref{emc.5}, we have for any $z\in K'\backslash p(S^1)$ that 
\begin{equation}
	\left| U_{\xi_N}(z) - U_{\xi}(z) 	\right| = o(1)
\end{equation}
with probability $\geq 1 - \e^{-N}- \e^{-N^{\varepsilon_0/4}}$. By the Borel-Cantelli theorem 
if follows that for every $z\in K'\backslash p(S^1)$ 
\begin{equation}
	U_{\xi_N}(z)   \longrightarrow  U_{\xi}(z), \quad \hbox{as } N\to \infty, \hbox{ almost surely},
\end{equation}
which by \cite[Theorem 7.1]{SjVo19} concludes the proof of Theorem \ref{main2}.

\end{document}